\pgfplotsset{compat=1.18}
\crefname{lemma}{Lemma}{Lemmata}
\crefname{subsection}{Subsection}{Subsections}
\let\olditemize=\itemize
\let\endolditemize=\enditemize
\renewenvironment{itemize}{\olditemize \itemsep0em}{\endolditemize}
\newtheoremstyle{aussagen}
{}                
{}                
{\slshape}        
{}                
{\bfseries}       
{}                
{0.5em}           
{\thmname{#1}\thmnumber{ #2}. \thmnote{(#3)}}  
\newtheorem{theorem}{Theorem}[section]
\newtheorem{lemma}[theorem]{Lemma}
\newtheorem{definition}[theorem]{Definition}
\newtheorem{remark}[theorem]{Remark}
\title{A Random Walk Approach to Broadcasting \\on Random Recursive Trees}
\date{\today}
\author{\hspace{1mm}Ernst Althaus \\
	Institute of Computer Science\\
	Johannes Gutenberg University\\
	Mainz, Germany \\
	\And
	\hspace{1mm}Lisa Hartung \\
Institute of Mathematics\\
  Johannes Gutenberg University\\
  Mainz, Germany \\
	\And
	\hspace{1mm}Rebecca Steiner \\
  Institute of Mathematics\\
  Johannes Gutenberg University\\
  Mainz, Germany \\
}
\definecolor{darkblue}{RGB}{40,40,120}
\definecolor{darkred}{RGB}{220,20,20}
\definecolor{jgurot}{RGB}{193,0,42}
\definecolor{jgublue}{RGB}{0,138,193}
\colorlet{red}{jgurot}
\colorlet{blue}{jgublue}
\let\pgfimageWithoutPath\pgfimage
\renewcommand{\pgfimage}[2][]{\pgfimageWithoutPath[#1]{bildchen/#2}}
\def\todos{1}
  \newcommand{\todo}[1]{\textcolor{Maroon}{{\textbf [TODO: #1]}}}
  \newcommand{\todo}[1]{}
\def\comments{1}
  \newcommand{\cmnt}[1]{\textcolor{Green}{{[Comment: #1]}}}
  \newcommand{\cmnt}[1]{}
\def\mycomments{1}
  \newcommand{\mycmnt}[1]{\textcolor{TealBlue}{{[My Comment: #1]}}}
  \newcommand{\mycmnt}[1]{}
\DeclarePairedDelimiterXPP\Prob[1]{\mathbb{P}}{(}{)}{}{
  \providecommand\given{\nonscript\:\delimsize\vert\nonscript\:\mathopen{}}
  #1}
\DeclarePairedDelimiterXPP\Exp[1]{\mathbb{E}}{[}{]}{}{
  \providecommand\given{\nonscript\:\delimsize\vert\nonscript\:\mathopen{}}
  #1}
\DeclarePairedDelimiterXPP\Var[1]{\mathrm{Var}}{[}{]}{}{
  \providecommand\given{\nonscript\:\delimsize\vert\nonscript\:\mathopen{}}
  #1}
\DeclarePairedDelimiterXPP\Cov[1]{\mathrm{Cov}}{[}{]}{}{
  \providecommand\given{\nonscript\:\delimsize\vert\nonscript\:\mathopen{}}
  #1}
\let\deg\relax
\DeclarePairedDelimiterXPP\deg[2]{\mathrm{deg}_{#1}}{(}{)}{}{#2}
\DeclarePairedDelimiterXPP\degpl[2]{\mathrm{deg}^+_{#1}}{(}{)}{}{#2}
\DeclarePairedDelimiterX\skp[2]{\langle}{\rangle}{#1, #2}
\DeclarePairedDelimiter\abs{\lvert}{\rvert}
\DeclarePairedDelimiter\ceil{\lceil}{\rceil}
\DeclarePairedDelimiterX\Set[1]{\{}{\}}{
  \providecommand\given{\nonscript\:\delimsize\vert\nonscript\:\mathopen{}}
  #1}
\let\originalleft\left
  \let\originalright\right
\renewcommand{\left}{\mathopen{}\mathclose\bgroup\originalleft}
  \renewcommand{\right}{\aftergroup\egroup\originalright}
\newcommand{\cF}{\mathcal{F}}
\newcommand{\cA}{\mathcal{A}}
\newcommand{\cT}{\mathcal{T}}
\newcommand{\N}{\mathbb{N}}
\newcommand{\E}{\mathbb{E}}
\newcommand{\1}{\mathbbm{1}}
\newcommand{\bmaj}{b_{\textup{maj}}}
\newcommand{\Rmaj}{R_{\textup{maj}}}
\newcommand{\taubad}{\tau_{\text{\upshape low}}}
\newcommand{\taugood}{\tau_{\text{\upshape high}}}
\newcommand{\Zan}{Z_\alpha(n)}
\newcommand{\ZaN}{Z_\alpha(N)}
\newcommand{\rr}{(r,r)}
\newcommand{\rb}{(r,b)}
\newcommand{\br}{(b,r)}
\newcommand{\bb}{(b,b)}
\newcommand{\rw}{(r,\_)}
\renewcommand{\wr}{(\_,r)}
\newcommand{\bw}{(b,\_)}
\newcommand{\wb}{(\_,b)}
\newcommand{\srrN}{\#_{(r,r)([N])}}
\newcommand{\srbN}{\#_{(r,b)([N])}}
\newcommand{\sbrN}{\#_{(b,r)([N])}}
\newcommand{\sbbN}{\#_{(b,b)([N])}}
\newcommand{\srwN}{\#_{(r,\_)([N])}}
\newcommand{\swrN}{\#_{(\_,r)([N])}}
\newcommand{\sbwN}{\#_{(b,\_)([N])}}
\newcommand{\swbN}{\#_{(\_,b)([N])}}
\DeclareMathOperator{\sgn}{sgn}
\DeclareMathOperator{\Vari}{Var}
\newcommand{\conv}[1]{\overset{\text{#1}}{\longrightarrow}}
\newcommand{\colvec}[2]{(#1,#2)}
\newcommand{\bigcolvec}[2]{\big(#1,#2\big)}
\renewcommand{\nsim}{\mathrel{\mathpalette\n@sim\relax}}
\newcommand{\n@sim}[2]{%
  \ooalign{%
    $\m@th#1\sim$\cr
    \hidewidth$\m@th#1\rotatebox[origin=c]{60}{$#1-$}$\hidewidth\cr
  }%
}
\newcommand*\rel@kern[1]{\kern#1\dimexpr\macc@kerna}
\newcommand*\widebar[1]{%
  \begingroup
  \def\mathaccent##1##2{%
    \rel@kern{2.2}%
    \overline{\rel@kern{-1.8}\macc@nucleus\rel@kern{0.1}}%
    \rel@kern{-0.4}%
  }%
  \macc@depth\@ne
  \let\math@bgroup\@empty \let\math@egroup\macc@set@skewchar
  \mathsurround\z@ \frozen@everymath{\mathgroup\macc@group\relax}%
  \macc@set@skewchar\relax
  \let\mathaccentV\macc@nested@a
  \macc@nested@a\relax111{#1}%
  \endgroup
}
\newcommand\addtag[1][\theequation]{\refstepcounter{equation}\tag{#1}}
\begin{document}

\maketitle

\begin{abstract}
 In the broadcasting problem on trees, a $\{-1,1\}$-message originating in an
  unknown node is passed along the tree with a certain error probability $q$. The goal is to estimate the original message without knowing the order in which the nodes were informed. We show a connection to random walks with memory effects and use this to develop a novel approach to analyse the majority estimator on random recursive trees. With this powerful approach, we study the entire group of very simple increasing trees as well as shape exchangeable trees together. This also extends Addario-Berry et al.~(2022) 
  who investigated this estimator for uniform and linear preferential attachment random recursive trees.
\end{abstract}

\keywords{broadcasting, random recursive trees, random walks with memory effects, Pólya urns}


\section{Introduction}
Incrementally growing random trees and networks are important building blocks in understanding the formation of networks and their structural properties. Analyzing how potentially false information may spread in such a network is a complicated task with many possible modelling approaches. In this
article we study the broadcasting process for two classes of growing random trees. A growing random tree is a sequence of trees $(T_n)_n$ with $T_1$ containing the isolated vertex $1$ and $T_{n+1}$ constructed out of $T_n$ by attaching the vertex $n+1$ to $T_n$ via one new edge. In these trees, the paths from vertex $1$, the root, to all leaves have increasing ages, earning them the label of increasing or \emph{recursive} trees. The attachment point of vertex $n+1$ is chosen according to a given attachment distribution depending only on $T_n$: At each time $n$, each vertex is given a weight and the probability that vertex $n+1$ will attach to it is proportional to this weight. The simplest weighting procedure assigns each vertex the same attachment weight, generating a uniform attachment tree. This tree process has, at size $n$, the same distribution over trees of size $n$ as uniformly choosing one among all possible recursive trees of size $n$. However, different weightings lead to other tree distributions~\cite{Drm09}. A natural next step is allowing dependence of these weights on vertex properties such as the number of (outgoing) edges, also known as its (out)degree. This leads to our two classes: Very simple increasing trees~\cite{Drm09} and shape exchangeable trees~\cite{CX21}. Very simple increasing trees are a family of growing random trees where the attachment weights are a linear function on the outdegree of the vertex. This family of trees separates into three sub-families: Uniform attachment, linear preferential attachment and uniform attachment on a $d$-ary tree. The choice of using the outdegree instead of the total degree of the vertex is motivated by an analytic combinatorics approach~\cite{Flaj09,DW19}. Since we will not be using such an approach, considering a similar family of tree models where the vertices are weighted by their entire degree is a sensible extension. This model group is known as shape exchangeable trees, originally introduced by Crane and Xu~\cite{CX21} in their study of root reconstructability. This family also contains uniform attachment, a different linear preferential attachment and uniform attachment on a $d$-\emph{regular} tree. We will see that this small change also causes some differences in the bounds we obtain, though largely the two groups behave the same, as one would intuitively expect.

Now, the broadcasting process on a growing tree can informally be described as follows: Consider a sequence of
trees $(T_n)_{n\in \mathbb{N}}$ as above. At the beginning, the tree consists of only the root vertex which additionally receives one of two available colors. Each timestep, a new vertex attaches itself to the tree and receives the color of its parent with probability $1-q$ and the opposite color with probability $q$, independent of the other vertices and their colors. The key question we study in this paper is the influence of the root color on the appearance of the colored tree when its size grows to infinity. We phrase this question of the local color-passing interactions influencing the global behavior as an estimation problem and investigate the relation between the color majority and the root color. To analyse the evolution of this color majority, we present a novel modelling approach of it as a space- and time-inhomogeneous random walk which is related to both a Pólya urn process~\cite{Frie49} with random replacements and the reinforced Elephant Random Walk~\cite{Lau22}. We describe our approach in detail in~\cref{sec:results}.

Formally, the two classes of random trees we consider are defined as follows. Let $(T_n)_{n \in \N}$ be a sequence of growing trees, with $T_n$ containing $n$ vertices. The vertices of $T_n$ can naturally be labeled by
$\{1,\dots,n\}$ according to their arrival time, making each $T_n$ a random recursive tree~\cite{Drm09} of size $n$. We denote by $\deg{n}{v}$ the degree of vertex $v$ at time $n$ and by $\degpl{n}{v}$ its outgoing degree, where the outgoing degree only counts edges to vertices with a bigger label, also called \emph{children} of $v$. So, for $v = 1$, $\degpl{n}{v} = \deg{n}{v}$, and for all other vertices, $\degpl{n}{v} = \deg{n}{v} -1$.

\begin{definition}[Very Simple Increasing Tree]\label{def:vsinctree}
  A \emph{very simple increasing (v.s.i.) tree} is a random recursive tree $(T_n)_{n \in \N}$ that
  can be grown iteratively with the following attachment probability
  distribution for each new vertex:
  \[\forall v \in \{1,\ldots,n\}: \: \Prob*{n+1 \sim v \given T_n} =
    \frac{\alpha\degpl{n}{v}+1}{\alpha(n-1) + n}, \addtag\]
  where
  \[\alpha \in \Set*{\tfrac{-1}{d} \given d \in \N_{> 1}} \cup
    [0,\infty).\phantom ] \addtag\]
\end{definition}

\begin{definition}[Shape Exchangeable Tree]\label{def:shextree}
  A \emph{shape exchangeable (s.e.) tree} is a random recursive tree $(T_n)_{n \in \N}$ that
  can be grown iteratively with the following attachment probability
  distribution for each new vertex:
  \[\forall v \in \{1,\ldots,n\}: \: \Prob*{n+1 \sim v \given T_n} =
    \frac{\alpha\deg{n}{v}+1}{2\alpha (n-1) + n}, \addtag\]
  where
  \[\alpha \in \Set*{\tfrac{-1}{d} \given d \in \N_{> 2}} \cup
    [0,\infty).\phantom ] \addtag\]
\end{definition}
The main difference between these two models is found in their treatment of the root vertex - the root only has outgoing edges, while all other vertices have one ingoing edge. This causes differing attachment probabilities on the same tree: Consider for example the tree $T_2$ consisting of two connected vertices and let $\alpha > 0$. In the shape exchangeable case, these two will be indistinguishable while in very simple increasing trees, vertex $1$ will have a larger attachment probability than vertex $2$. Thus the root is harder to distinguish from the other vertices in shape exchangeable trees, making the estimation problem more interesting.
As previously mentioned, each model separates into three subgroups. This is governed by the parameter $\alpha$. $\alpha = 0$ corresponds to uniform attachment, as the vertex (out)degree is not taken into consideration and each vertex is assigned weight $\frac1n$. For $\alpha > 0$, high vertex (out)degrees correspond to a linearly higher attachment weight, giving linear preferential attachment. Finally, for $\alpha < 0$ we see the opposite behavior. Here we must, as is also done in other work~\cite{DHW21}, restrict ourselves to values that will always give a valid attachment probability throughout the run of the process. Then, $\alpha = -\frac1d$ can be seen as each vertex starting out with $d$ free (outgoing) edges, of which the remaining ones are then uniformly sampled for the next attachment. This is equivalent to uniform attachment on a $d$-ary or $d$-regular tree, respectively. While $\alpha \in \{-\frac11, -\frac12\}$ both appear technically possible, we partially exclude them from the allowed parameters in our study: In the very simple increasing tree model, $\alpha = -1$ generates a long path with the root on one end, whereas $\alpha = -\frac12$ in the shape exchangeable model generates a long path with the root in the middle. Now, intuitively we may see that in the very simple increasing tree model, after the first flip has happened, we have the same process with the opposite root color. In the shape exchangeable model, after a flip has happened on both sides of the root, we again have the same process with the opposite root color. Therefore, for any majority estimation to be successful, the amount of vertices before those flips must be larger than constant order. However, this amount is geometrically distributed in both models and so this does not happen with a significant probability. The difference to the viable models with $\alpha < 0$ is that here, the amount of required flips stays constant with growing $n$ while in the other settings it grows exponentially. $\alpha = -1$ is actually not feasible in the shape exchangeable model, as after $T_2$ is generated, all vertices have attachment weight $0$ and no further vertices can attach.

\begin{definition}[Broadcasting Process]\label{def:broadcasting}
    The \emph{broadcasting process} $(\mathcal{T}_n)_{n \in \N}$ with \linebreak $ \mathcal{T}_n= T_n \times \{-1,1\}^n$ is a combination of a growing tree process $T_n$ and a coloring $\{-1,1\}^n$.
    $\mathcal{T}_{n+1}$ is obtained from $\mathcal{T}_n$ as follows:
    At time $n+1$, the new vertex $n+1$ will first choose its
    parent $p_{n+1}$ according to the attachment distribution given by $T_n$. It will then inherit its parent's color $B_{p_{n+1}}$ with probability $1-q$ and flip to the
    other color with probability $q$ independently of all other vertices and their colors. The parameter $q$ is called the \emph{bit-flipping probability} and one realisation of this process is called a \emph{broadcasting tree}.
\end{definition}

Additionally, vertex $1$ has no parent and is therefore assigned a randomly chosen color at time $1$. By symmetry, we may simply call this color ``red'' and the opposite color ``blue''. 
As mentioned above, the problem that we want to study can be viewed as reconstruction of the color at the root vertex~\cite{ADLV22}. At time $N$, we observe $\cT_N$ without any vertex labels or root, but with all vertex colors present. The estimator we consider is the
\emph{majority estimator}:

\begin{definition}
    The \emph{majority estimator} $\bmaj(N,q)$ is defined on a given broadcasting tree of size $N$ with bit-flipping probability $q$ as follows:
\[\bmaj(N,q) \coloneqq
  \begin{cases}
    \sgn\left( \sum_{u \in T_N} B_u\right) & \text{\upshape if }\sum_{u \in T_N} B_u \neq 0, \\
    \text{\upshape Rad}(\tfrac12) & \text{\upshape otherwise,}
  \end{cases} \addtag
\]
with $\text{\upshape Rad}(\tfrac12)$ a Rademacher($\tfrac12)$-distributed random variable.
\end{definition}

This estimator either outputs the color majority in a broadcasting tree of size $N$ or, if there is a tie, it makes a random
guess.
We are interested in analysing the limiting behavior of the error probability in relation to $q$, that is, \[\Rmaj(q) \coloneqq \limsup_{N \to \infty} \Rmaj(N,q) \coloneqq \limsup_{N \to \infty} \Prob*{\bmaj(N,q) \neq B_1}. \addtag\]

\paragraph{Related results}
This particular reconstruction problem has been previously investigated by~\cite{ADLV22} on a subgroup of very simple increasing trees, namely uniform and linear preferential attachment trees, as well as on uniformly grown k-DAGs~\cite{BDL23}. We aim to complete the picture given so far and to provide a more model-agnostic approach to the problem.
The broadcasting process and root color reconstruction have also been investigated on a wide range of random tree~\cite{ACGP21,ADLV22,DHW21,EKPS00,GRP20} and random graph models~\cite{BDL23,MMP20}. For some statistical hardness results for the reconstruction of the root color from the leaf bits, we refer to~\cite{Eft14,HM24,KM22,MSS23}. 

Further, our problem is naturally linked to root-finding algorithms. On uniform and (nonlinear) preferential attachment trees,~\cite{BDL17} showed that there is a vertex set of constant size that contains the root with high probability. This was further extended to uniform attachment on $d$-regular trees in~\cite{KL17}, with a new sharpness result presented for uniform and $d$-regular trees in~\cite{AFKLT24}.
Additionally, a more generally applicable approach to such inference problems has been studied for shape exchangeable trees in~\cite{CX21}. Root reconstruction is also linked to the question of how a given finite seed graph influences the shape and structure of the resulting tree or graph. This problem is studied in~\cite{BEMR17,LP19,RD19} for uniform attachment trees and in~\cite{BMR15,CDKM15} for preferential attachment trees. For general networks this may evolve into studying \emph{hubs} or the position of a central vertex, see \cite{JL17,BB21,BB22}.
Similar problems can be investigated on the stochastic block model~\cite{MNS14,ACGP21} and in models arising from statistical physics, such as the Ising model~\cite{EKPS00,BRZ95}. 

Finally, the question of the color majority is closely connected to other stochastic processes that exhibit similar self-interacting behavior. Two processes we will use in this article are Pólya urns~\cite{EP23,Jans04,Mah08} and inhomogeneous random walks~\cite{Hui17,MV08,MPW16}. In a Pólya urn, we may represent the colored vertices as colored balls. The random walk model we consider has time- and space-inhomogeneous increments with vanishing drift. There is a large body of literature on such random walks, for example with non-identically distributed increments~\cite{DSW18} or with drift vanishing at infinity, also known as Lamperti problem~\cite{DKW13,Lam60}. The inhomogeneous random walk model we will use is the (reinforced) Elephant Random Walk (ERW)~\cite{ST04,Lau22,CL23}: Here, the one-dimensional walker remembers a randomly chosen point in the past before each step. With probability $1-q$, it repeats this past step and with probability $q$ it moves in the opposite direction. By representing each colored vertex as the time at which it was added and the color as either ``up'' oder ``down'', the relation to the broadcasting process is quite natural. We will further detail both these representations in~\cref{subsec:polya,subsec:rw}. Both the ERW and the Pólya urn exhibit phase transitions from a (sub-)diffusive to a superdiffusive regime~\cite{Jans04,ST04}. These views of the color majority process then imply such a phase transition depending on the tree parameter $\alpha$ and the bit-flipping probability $q$: If $q$ is too large, the process is \emph{diffusive}, while it is \emph{superdiffusive} for small values of $q$~\cite{Lau22,GLR24}.

\paragraph{Outline of the article}
In~\cref{sec:results} we present our modelling approaches for the majority estimator and our results on its performance in relation to the bit-flipping probability $q$, which we then prove in~\cref{sec:polya,sec:randomwalk}.


\section{Results and preliminaries}\label{sec:results}
\subsection{Main results}

\begin{theorem}\label{thm:bmaj-impossibility}
    Let \[f(\alpha) = \begin{cases} \frac{\alpha + 1}{4} & \text{ for very simple inc. trees} \\
    \frac{2 \alpha + 1}{4(\alpha + 1)} & \text{ for shape ex. trees.}
    \end{cases} \addtag\] For shape exchangeable and very simple increasing trees, it holds that for $q \geq f(\alpha)$,
    \[\Rmaj(q) = \frac12. \addtag\]
\end{theorem}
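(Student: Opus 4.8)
For $q \ge f(\alpha)$ the colour imbalance $D_N := \sum_{u \in T_N} B_u$ is \emph{diffusive}: its typical fluctuations are of strictly larger order than its mean, so $\sgn(D_N)$ asymptotically decouples from $B_1$ and the majority vote is no better than a fair coin. I would make this precise in three steps.

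\textbf{Step 1 (reduction to the sign of one walk).} Let $\epsilon_v \in \{-1,1\}$ be the flip variable that vertex $v$ uses when it attaches, so $B_v = \epsilon_v B_{p_v}$ with the $\epsilon_v$ i.i.d.\ ($\Prob*{\epsilon_v = -1} = q$) and independent of the tree and of $B_1$. Unrolling along ancestral paths gives $B_v = B_1 \prod_{w \in A(v)} \epsilon_w$, where $A(v)$ is the set of ancestors of $v$ (including $v$) other than the root; hence $D_N = B_1 Y_N$ with $Y_N := \sum_{v \in T_N} \prod_{w \in A(v)} \epsilon_w$, a function of the tree shape and the flips alone, independent of $B_1$, and having the law of $D_N$ conditioned on $\{B_1 = 1\}$. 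Since on $\{Y_N \neq 0\}$ we have $\bmaj(N,q) = B_1 \sgn(Y_N)$ while on $\{Y_N = 0\}$ it is an independent fair coin,
\[
  \Rmaj(N,q) = \Prob*{Y_N < 0} + \tfrac12 \Prob*{Y_N = 0} = \tfrac12 - \tfrac12\bigl(\Prob*{Y_N > 0} - \Prob*{Y_N < 0}\bigr).
\]
So it suffices to prove $\Prob*{Y_N > 0} \to \tfrac12$ and $\Prob*{Y_N < 0} \to \tfrac12$.

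\textbf{Step 2 (the associated urn/walk and its spectrum).} I would track the pair $(D_n, W_n)$ with $W_n := \sum_v B_v \degpl{n}{v}$ for v.s.i.\ trees and $W_n := \sum_v B_v \deg{n}{v}$ for s.e.\ trees. Given $\cF_n$, the parent colour $B_{p_{n+1}} \in \{-1,1\}$ has conditional mean $u_n/((\alpha+1)n - \alpha)$ (resp.\ $u_n/((2\alpha+1)n - 2\alpha)$) with $u_n := \alpha W_n + D_n$, and $D_{n+1} - D_n = \epsilon_{n+1} B_{p_{n+1}}$, $W_{n+1} - W_n = B_{p_{n+1}}$ (resp.\ $(\epsilon_{n+1}+1)B_{p_{n+1}}$). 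Thus the increments depend on the past only through the scalar $u_n$, which exhibits $(D_n, W_n, n)$ as a balanced, time-inhomogeneous linear urn — colour counts together with degree-weighted colour counts — whose normalised replacement matrix has Perron eigenvalue $\lambda_1$ (the total-count direction, growing linearly), the eigenvalue $0$, and a third eigenvalue $\lambda_2$ with
\[
  \frac{\operatorname{Re}\lambda_2}{\lambda_1} = 1 - \frac{2q}{\alpha+1}\ \ (\text{v.s.i.}), \qquad \frac{\operatorname{Re}\lambda_2}{\lambda_1} = 1 - \frac{2q(\alpha+1)}{2\alpha+1}\ \ (\text{s.e.}).
\]
A short analysis of the first- and second-moment recursions for $(D_n, W_n)$ (a $2\times2$ and a $3\times3$ linear system) yields in both models the dichotomy $\operatorname{Re}\lambda_2 \le \tfrac12\lambda_1 \iff q \ge f(\alpha)$, with equality precisely at $q = f(\alpha)$, together with $v_N := \Exp*{D_N^2} = \Theta(N)$ for $q > f(\alpha)$ and $= \Theta(N\log N)$ for $q = f(\alpha)$, while the conditional mean satisfies $\Exp*{D_N \given B_1 = 1} = O\bigl(N^{(\operatorname{Re}\lambda_2/\lambda_1)\vee 0}\bigr)$ (bounded when that exponent is non-positive) and is therefore $o(\sqrt{v_N})$ for every $q \ge f(\alpha)$.

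\textbf{Step 3 (CLT and conclusion).} In the regime $\operatorname{Re}\lambda_2 \le \tfrac12\lambda_1$ the urn/reinforced-elephant-walk central limit theorem — the Pólya-urn version in the spirit of Janson, or the martingale CLT for elephant-type walks, with the boundary $q = f(\alpha)$ carrying the $\sqrt{\log N}$ correction — gives
\[
  \frac{D_N - \Exp*{D_N}}{\sqrt{v_N}} \ \conv{d}\ \cN(0, \sigma^2), \qquad \sigma^2 > 0,
\]
the non-degeneracy $\sigma^2 > 0$ following from $\Var*{D_N} \asymp v_N$ established in Step 2. Since the centering is $o(\sqrt{v_N})$, the statement persists with $\Exp*{D_N}$ removed, and because $Y_N$ has the conditional law of $D_N$ given $\{B_1 = 1\}$ we get $Y_N/\sqrt{v_N} \conv{d} \cN(0,\sigma^2)$, a centred, non-degenerate, atomless limit. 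Hence $\Prob*{Y_N > 0} \to \tfrac12$, $\Prob*{Y_N < 0} \to \tfrac12$ and $\Prob*{Y_N = 0} \to 0$; substituting into the identity of Step 1 gives $\Rmaj(N,q) \to \tfrac12$, and therefore $\Rmaj(q) = \limsup_{N\to\infty} \Rmaj(N,q) = \tfrac12$.

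\textbf{Main obstacle.} The crux is Step 3: producing one limit theorem valid for both v.s.i.\ and s.e.\ trees, all admissible $\alpha$ (including the bounded-degree values $\alpha = -1/d$), and the boundary $q = f(\alpha)$. The attachment rule is genuinely time-inhomogeneous and, for $\alpha \neq 0$, degree-biased, so no textbook Pólya-urn CLT applies verbatim; one must either encode the process as a finite generalised urn (refining colours by degree level where degrees are bounded, or admitting random replacements) and verify the hypotheses of Janson's theorem — in particular that the colour-difference functional is not aligned with the Perron direction, so that $\sigma^2 > 0$ — or run a martingale CLT directly on the martingale $u_n\big/\prod_{k<n}\bigl(1 + \tfrac{\alpha+1-2q}{(\alpha+1)k-\alpha}\bigr)$ and its second-order analogues, checking the Lindeberg and predictable-quadratic-variation conditions. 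Once the CLT is available, Steps 1–2 reduce to elementary linear algebra.
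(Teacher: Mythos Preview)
Your proposal is correct and follows essentially the same route as the paper: both reduce to the sign of the colour imbalance, encode the dynamics as a linear (Pólya-urn/reinforced-walk) system whose second eigenvalue satisfies $\lambda_2/\lambda_1 \le \tfrac12$ precisely when $q \ge f(\alpha)$, and then invoke Janson's urn CLT in that diffusive regime to get a centred Gaussian limit for the normalised imbalance.

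The one place where the paper is more concrete than your sketch is exactly the point you flag as the ``main obstacle''. Rather than refining colours by degree level or running a bespoke martingale CLT, the paper (following Desmarais--Holmgren--Wagner) uses a clean four-type urn: two \emph{weight} types $r_w,b_w$ with activity $1$ carrying the attachment weights, and two \emph{count} types $r_c,b_c$ with activity $0$ counting vertices. This yields explicit $4\times 4$ replacement matrices $A_{\text{vs}}$ and $A_{\text{se}}$ with randomised entries of bounded support, for which Janson's assumptions (A1)--(A8) are immediate; the eigenvalues are exactly the $\lambda_1,\lambda_2$ you computed, and the colour difference $\Delta_1(N)=X_{3,N}-X_{4,N}$ lies in the $\lambda_2$-eigenspace (orthogonal to the Perron direction since $v_{1,3}=v_{1,4}$), so non-degeneracy of the limiting variance is automatic. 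With that encoding in hand, your Steps 1--3 go through verbatim.
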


\begin{theorem}\label{thm:bmaj-bound}
For shape exchangeable and very simple increasing trees, it holds that for each
  allowed $\alpha$ there exists $c_\alpha > 0$ such that
  \[\Rmaj(q) \leq c_\alpha \sqrt{q}. \addtag\]
\end{theorem}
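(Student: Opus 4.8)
The plan is to reduce the statement to a second-moment estimate for the signed colour sum and then to extract that estimate from the Pólya urn recursion underlying both tree models. Throughout I condition on $B_1=r$, which I identify with $+1$, and write $Z_\alpha(n):=\sum_{u\in T_n}B_u$. The majority estimator errs only on $\{Z_\alpha(N)<0\}$ and, with probability $\tfrac12$, on $\{Z_\alpha(N)=0\}$, so $\Rmaj(N,q)\le\Prob*{Z_\alpha(N)\le0\given B_1=r}$, and as soon as the conditional mean is positive the one-sided Chebyshev (Cantelli) inequality gives
\[
 \Rmaj(N,q)\ \le\ \frac{\Var*{Z_\alpha(N)\given B_1=r}}{\Var*{Z_\alpha(N)\given B_1=r}+\Exp*{Z_\alpha(N)\given B_1=r}^2}.
\]
It therefore suffices to bound $\limsup_{N}\Var*{Z_\alpha(N)\given B_1=r}/\Exp*{Z_\alpha(N)\given B_1=r}^2$ by a quantity that is $O(\sqrt q)$ as $q\downarrow0$; for the remaining values of $q$ --- in particular all $q\ge f(\alpha)$, where \cref{thm:bmaj-impossibility} already gives $\Rmaj(q)=\tfrac12$ --- the asserted inequality becomes vacuous once $c_\alpha$ is chosen large, using only $\Rmaj\le\tfrac12$.

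For the mean I would introduce the weighted colour sum $W_n:=\sum_v\degpl{n}{v}B_v$ (with $\deg{n}{v}$ in place of $\degpl{n}{v}$ for shape exchangeable trees). If vertex $n+1$ attaches to a parent $p$, then $W_{n+1}=W_n+B_p$ and $Z_\alpha(n+1)=Z_\alpha(n)+\varepsilon_{n+1}B_p$, with $\varepsilon_{n+1}=+1$ with probability $1-q$ and $-1$ otherwise, independently of the past, and the attachment law makes $\Exp*{B_p\given\cT_n}$ linear in $(Z_\alpha(n),W_n)$. The combination $U_n:=Z_\alpha(n)+\alpha W_n$ --- respectively $U_n:=(1+\alpha)Z_\alpha(n)+\alpha W_n$ in the shape exchangeable case --- is distinguished: $\Exp*{B_p\given\cT_n}$ equals $U_n$ divided by the normalising constant $\alpha(n-1)+n$ (resp.\ $2\alpha(n-1)+n$), up to an $O(1)$ shift, so that
\[
 \Exp*{U_{n+1}\given\cT_n}\ =\ \Bigl(1+\frac{1-2q+\alpha}{\alpha(n-1)+n}\Bigr)U_n
\]
for v.s.i.\ trees, and the analogous near-multiplicative recursion with a lower-order affine term in the other case. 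Hence $\Exp*{U_n}\sim c_\alpha(q)\,n^{\lambda_\alpha(q)}$ with $\lambda_\alpha(q)=\tfrac{1-2q+\alpha}{1+\alpha}$ (resp.\ $1-\tfrac{2q(1+\alpha)}{1+2\alpha}$), and a short computation shows $\lambda_\alpha(q)>\tfrac12$ exactly when $q<f(\alpha)$, while $\lambda_\alpha(q)\to1$ as $q\downarrow0$. Since the driftless combination $Y_n:=Z_\alpha(n)-(1-2q)W_n$ satisfies $\Exp*{Y_{n+1}\given\cT_n}=Y_n$ and $\Exp*{Y_n}\equiv1$, one recovers $Z_\alpha(n)$ as a fixed positive linear combination of $U_n$ and $Y_n$; thus $\Exp*{Z_\alpha(n)}$ is a positive multiple of $\Exp*{U_n}$ up to an additive constant, and is in fact positive for every $n$ when $q<f(\alpha)$, which is what the Cantelli step requires.

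For the variance, squaring the one-step recursion for $U_n$ and using $\Exp*{B_p\given\cT_n}^2=U_n^2/(\alpha(n-1)+n)^2$ together with $\Exp*{(\varepsilon_{n+1}+\alpha)^2}=(1+\alpha)^2-4\alpha q$ gives the closed scalar recursion
\[
 \Exp*{U_{n+1}^2\given\cT_n}\ =\ \Bigl(1+\frac{2(1-2q+\alpha)}{\alpha(n-1)+n}\Bigr)U_n^2+\bigl((1+\alpha)^2-4\alpha q\bigr),
\]
equivalently the conditional-variance identity $\Var*{U_{n+1}\given\cT_n}=4q(1-q)+(1-2q+\alpha)^2\bigl(1-\Exp*{B_p\given\cT_n}^2\bigr)$. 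Solving this and comparing term by term with $\Exp*{U_n}^2$ yields $\Var*{U_n}/\Exp*{U_n}^2\to V_\alpha(q)$ for $q<f(\alpha)$, where $V_\alpha(q)$ is built from ratios of products $\prod_k(1+c/k)$ and a series convergent because $2\lambda_\alpha(q)>1$; one verifies $V_\alpha(0)=0$ --- consistent with $Z_\alpha(n)=n$ being deterministic at $q=0$ --- and, by Gamma-function asymptotics, $V_\alpha(q)=O(\sqrt q)$ as $q\downarrow0$, uniformly for $\alpha$ in compact subsets of the allowed range. Since $\Var*{Y_n}=4q(1-q)(n-1)=o(n^{2\lambda_\alpha(q)})$, expressing $Z_\alpha(n)$ through $U_n$ and $Y_n$ and bounding the cross term by Cauchy--Schwarz gives $\limsup_N\Var*{Z_\alpha(N)\given B_1=r}/\Exp*{Z_\alpha(N)\given B_1=r}^2=V_\alpha(q)$, and inserting this into the Cantelli bound finishes the argument.

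The delicate point is the variance estimate. The crude bound $\Var*{U_{n+1}\given\cT_n}\le(1+\alpha)^2$ discards the cancellation in the identity above and yields only $\Rmaj(q)=O(1)$; one must keep the negative term $-(1-2q+\alpha)^2\Exp*{B_p\given\cT_n}^2$ and exploit that $\Exp*{B_p\given\cT_n}^2=U_n^2/(\alpha(n-1)+n)^2$ stays close to $1$ for most $n$ when $q$ is small. Because $\Exp*{U_n^2}$ is precisely the object being controlled, this estimate is self-referential and must be closed by a bootstrap, with a crossover near $n\asymp e^{c/q}$ beyond which $\Exp*{U_n^2}/(\alpha(n-1)+n)^2$ is no longer close to $1$ but contributes only a superpolynomially small amount. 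Carrying this through with workable constants for all admissible $\alpha$ --- and absorbing the root-induced affine corrections of the shape exchangeable model --- is the technical heart, and is where one trades the presumably optimal rate $O(q)$ for the cleaner, more robust $O(\sqrt q)$.
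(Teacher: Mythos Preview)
Your route --- Cantelli's inequality on the colour sum, fed by exact first and second moment recursions for the eigencombination $U_n$ --- is genuinely different from the paper's and is essentially the method of \cite{ADLV22}. The paper instead runs a random-walk argument: it introduces stopping times $\taugood(A)$ and $\taubad(B)$ for when $\Delta_1+\alpha\Delta_2$ crosses the levels $A\,\Zan\sqrt n$ and $B\,\Zan\sqrt n$, shows that $Y(n)=n/(\Delta_1(n)+\alpha\Delta_2(n))^2$ is a supermartingale between them, and extracts the $\sqrt q$ rate via optional stopping with $A=q^{-(1/2-\gamma)}$, together with a separate concentration step linking $\Delta_1$ back to $\Delta_1+\alpha\Delta_2$. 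Carried out correctly, your approach is both more elementary and gives the sharper bound $\Rmaj(q)\le c_\alpha q$; this is precisely why the paper concedes that its bound ``is not as tight'' as the one in \cite{ADLV22}.

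The decisive estimate, though, is only asserted, and your plan for proving it is misguided. You correctly obtain $\Exp*{U_{n+1}^2\given\cT_n}=(1+2\rho/n')\,U_n^2+c$ with $c=(1+\alpha)^2-4\alpha q$ (v.s.i.), and the s.e.\ analogue. Taking expectations yields an \emph{exact linear scalar} recursion for $\Exp*{U_n^2}$ alone; nothing is self-referential and no bootstrap is needed. One solves it explicitly, forms $r_n=\Exp*{U_n^2}/\Exp*{U_n}^2$, and checks that $r_\infty(q)=\lim_n r_n(q)$ is analytic near $q=0$ with $r_\infty(0)=1$, whence $V_\alpha(q)=r_\infty(q)-1=O(q)$. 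Your last paragraph --- retaining the negative term $-\rho^2\Exp*{B_p\given\cT_n}^2$, a crossover at $n\asymp e^{c/q}$, trading $O(q)$ for $O(\sqrt q)$ --- comes from trying to control the \emph{conditional} variance pathwise. Chebyshev never asks for that: the unconditional second moment is all you need, and its recursion is already closed.

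Two minor points. Your $Z_\alpha(n)$ collides with the paper's $\Zan$, which denotes a normalising constant; your colour sum is the paper's $\Delta_1(n)$. And for shape exchangeable trees the right eigencombination is still $\Delta_1(n)+\alpha W_n$ with $W_n=\sum_v\deg{n}{v}B_v$, not $(1+\alpha)\Delta_1(n)+\alpha W_n$, since $\Exp*{B_p\given\cT_n}=(\Delta_1(n)+\alpha W_n)/(2\alpha(n-1)+n)$; this is exactly the paper's $\Delta_1+\alpha\Delta_2$.
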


\subsection{Discussion of our results}

We note that the lower bound on $q$ in~\cref{thm:bmaj-impossibility} looks quite different for shape exchangeable and very simple increasing trees. In particular, for $\alpha > 3$, $q$ is always smaller than $f(\alpha)$ in the very simple increasing tree model, while that cannot happen in the shape exchangeable model. This highlights that while our two tree models appear very similar, there are some subtle differences. Comparing our impossibility result to~\cite{ADLV22,Lau22_rein}, we find that their results match ours for a subgroup of the considered tree models and it is therefore a natural extension of their results.
In~\cite{ADLV22}, the authors also analyse the majority estimator on very simple increasing trees for $\alpha \geq 0$. Via moment calculations, they achieve a version of~\cref{thm:bmaj-bound} with an error bound on $\Rmaj$ of linear order in $q$, which is sharper than our result. In our approach, we consider the majority estimator as a random walk with memory, also a subject of recent research interest, see e.g.~\cite{Lau22_rein,Ber22,Ber25,DK25}. With this connection, we are able to give a less model-specific analysis of the entire family of very simple increasing and shape exchangeable trees at once. Additionally, we can see in our analysis that the initial phase of the process is essential in determining the long-term behavior, even though our restriction to the setting where the process first crosses a very high boundary is at fault for our weaker error bound. Since the random walk model exhibits a vanishing drift term, expecting this behavior still seems reasonable and supports the intuitive understanding of the process. Finally, expanding the study to the $\alpha < 0$ range lets us consider trees with a given maximal (out)degree and it is interesting to see that root color estimation is still viable even in restricted-degree models. Further expansions to other random graph models also seem possible as long as they fit into the random walk with memory viewpoint.

\subsection{Color majority as an inhomogeneous random walk}\label{subsec:rw}
Calculating the color majority of a broadcasting tree does not require any information about the tree structure, only the vertex colors. Therefore, we may consider the process describing the evolution of the \emph{color difference}~\cite{ADLV22}.

\begin{definition}
    Call the color of the root vertex ``red''. Let $\#_{\text{\upshape red}}(n)$ and $\#_{\text{\upshape blue}}(n)$ denote the number of red, respectively blue, vertices at time $n$. Then set
    \begin{equation}
    \Delta_1(n) \coloneqq \#_{\text{\upshape red}}(n) - \#_{\text{\upshape blue}}(n) \quad \text{ for all } n \leq N,
    \end{equation}
    with $\Delta_1(1) = 1$.
\end{definition}

It is clear that in each timestep, the color difference may only increase or decrease by exactly one. In the $\alpha = 0$  case, the current color difference is sufficient to describe the distribution of these increments. For $\alpha \neq 0$, the weights of the vertices must also be taken into consideration.

\begin{definition}
    Let
    \begin{align*}
        \#_{\text{\upshape red weight}}(n)={}& \begin{cases}
            \sum_{v\text{ \upshape red}} \degpl{n}{v} & \text{\upshape for very simple inc. trees} \\
            \sum_{v\text{ \upshape red}} \deg{n}{v} & \text{\upshape for shape ex. trees} \\
        \end{cases} \\
    \intertext{as well as}
        \#_{\text{\upshape blue weight}}(n)={}& \begin{cases}
            \sum_{v\text{ \upshape blue}} \degpl{n}{v} & \text{\upshape for very simple inc. trees} \\
            \sum_{v\text{ \upshape blue}} \deg{n}{v} & \text{\upshape for shape ex. trees.} \\
        \end{cases}
    \end{align*}
\end{definition}

In very simple increasing trees, e.g. $\#_{\text{\upshape red weight}}(n)$ is given by the number of outgoing edges that the red vertices have, while in shape exchangeable trees it is given by the total number of edges that the red vertices have.

\begin{definition}
    Let
    \[\Delta_2(n) \coloneqq \#_{\text{\upshape red weight}}(n) - \#_{\text{\upshape blue weight}}(n) \quad \text{ for all } n \leq N, \addtag\]
    with $\Delta_2(1) = 0$.
\end{definition}

 With these two processes, we can now completely describe the evolution of the color difference in the broadcasting process.

\begin{definition}\label{def:delta-n}
    Let
    \[\Delta(n) \coloneqq \colvec{\Delta_1(n)}{\Delta_2(n)}, \addtag\]
    with
    \[\Delta_1(1) = 1 \qquad \Delta_2(1) = 0 \qquad \text{ and } \qquad \Delta(n+1) = \Delta(n) + D(n). \addtag\]
    In very simple increasing trees, 
    \[D(n) \in \Set*{\colvec{1}{1}, \colvec{1}{-1}, \colvec{-1}{1}, \colvec{-1}{-1}}\]
    and in shape exchangeable trees,
    \[D(n) \in \Set*{\colvec{1}{2}, \colvec{1}{0}, \colvec{-1}{0}, \colvec{-1}{-2}},\]
    each tuple corresponding to the attachment of a red, respectively blue, vertex to an existing red, respectively blue, vertex.
\end{definition}

Specifically, $\Delta_1(n)$ and $\Delta_2(n)$ together form a
two-dimensional time-inhomogeneous Markov random walk.

\begin{definition}
    We define the normalisation $\Zan$:
    \begin{equation}
    \Zan \coloneqq
    \begin{cases}
      \alpha(1-\frac1n)+1 & \text{ for very simple inc. trees} \\
      2\alpha(1-\frac1n)+1 & \text{ for shape ex. trees.}
    \end{cases}
\end{equation}
\end{definition}

\begin{lemma}
In both tree models, the probability that the new vertex $n+1$ attaches to an existing red vertex is
\begin{equation}
\Prob*{n+1 \sim \text{\upshape red vertex} \given \cT_n}= \frac12\left(1 + \frac{\Delta_1(n) + \alpha \Delta_2(n)}{\Zan n} \right).
\end{equation}
\end{lemma}

\begin{proof}
For very simple increasing trees the attachment distribution is given by:
\begin{align*}
  \Prob*{n+1 \sim \text{red vertex} \given \cT_n} ={}& \frac{\sum_{v\text{ red}}(\alpha \degpl{n}{v} + 1)}{\sum_{u \in \cT_n}(\alpha \degpl{n}{u} +1)} \\
  ={}& \frac{\sum_{v\text{ red}}(\alpha \degpl{n}{v} + 1)}{\alpha(n-1) + n} \\
  ={}& \frac12\left(1 + \frac{\Delta_1(n) + \alpha \Delta_2(n)}{(\alpha(1-\frac1n)+1)n} \right) \eqqcolon p_{\text{vs}}(\alpha,n). \addtag
\end{align*}
Similarly, for shape exchangeable trees, 
\begin{align*}
  \Prob*{n+1 \sim \text{red vertex} \given \cT_n} ={}& \frac12\left(1 + \frac{\Delta_1(n) + \alpha \Delta_2(n)}{(2\alpha(1-\frac1n)+1)n} \right) \eqqcolon p_{\text{se}}(\alpha,n). \addtag
\end{align*}
\end{proof}

All in all, let $\cF(n)$ be the natural filtration of $\Delta(n)$. Then, in very
simple increasing trees $D(n)$ is distributed as
\begin{align*}
  \Prob*{D(n) = \colvec{1}{1} \given \cF(n)} ={}& p_{\text{vs}}(\alpha,n) \cdot (1 - q) \\
  \Prob*{D(n) = \colvec{-1}{1} \given \cF(n)} ={}& p_{\text{vs}}(\alpha,n) q \\
  \Prob*{D(n) = \colvec{1}{-1} \given \cF(n)} ={}& \left(1 - p_{\text{vs}}(\alpha,n)\right)\cdot q\\
  \Prob*{D(n) = \colvec{-1}{-1} \given \cF(n)} ={}& \left(1 - p_{\text{vs}}(\alpha,n)\right)\cdot (1-q)  \addtag
\end{align*}
and in shape exchangeable trees as
\begin{align*}
  \Prob*{D(n) = \colvec{1}{2} \given \cF(n)} ={}& p_{\text{se}}(\alpha,n)\cdot(1-q) \\
  \Prob*{D(n) = \colvec{-1}{0} \given \cF(n)} ={}& p_{\text{se}}(\alpha,n)\cdot q \\
  \Prob*{D(n) = \colvec{1}{0} \given \cF(n)} ={}& (1 - p_{\text{se}}(\alpha,n))\cdot q \\
  \Prob*{D(n) = \colvec{-1}{-2} \given \cF(n)} ={}&  (1 - p_{\text{se}}(\alpha,n))\cdot (1-q). \addtag
\end{align*}

Finally, we relate the behavior of this inhomogeneous random walk to our estimation problem.

\begin{lemma}\label{thm:Rmaj-Delta}
\begin{equation}
\Rmaj(q) \leq \limsup_{N \to \infty} \Prob*{\Delta_1(N) \leq 0}.
\end{equation}
\end{lemma}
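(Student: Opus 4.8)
The plan is to unfold the definition of the majority estimator and reduce the error event to one involving only the color difference $\Delta_1(N)$. First I would use the red/blue (equivalently $\pm 1$) symmetry of the broadcasting process: since $B_1$ is a fair coin and relabeling the two colors leaves the law of $\mathcal{T}_N$ unchanged, we may condition on the root being red, i.e. $B_1 = 1$, so that
\[
  \Rmaj(N,q) = \Prob*{\bmaj(N,q) \neq B_1} = \Prob*{\bmaj(N,q) = -1 \given B_1 = 1},
\]
and under this conditioning the convention $\Delta_1(1) = 1$ is the correct normalization, with $\Delta_1(N) = \#_{\text{red}}(N) - \#_{\text{blue}}(N) = \sum_{u \in T_N} B_u$ (identifying red with $+1$ and blue with $-1$).

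Next I would split the error event along the possible tie. By definition $\bmaj(N,q) = \sgn(\Delta_1(N))$ whenever $\Delta_1(N) \neq 0$, and an independent $\mathrm{Rad}(\tfrac12)$ otherwise. Hence $\{\bmaj(N,q) = -1\}$ decomposes disjointly into $\{\Delta_1(N) < 0\}$ and $\{\Delta_1(N) = 0\} \cap \{\text{tie-breaking coin} = -1\}$, and since the tie-breaking coin is independent of $\Delta_1(N)$ we get
\[
  \Rmaj(N,q) = \Prob*{\Delta_1(N) < 0} + \tfrac12 \Prob*{\Delta_1(N) = 0}.
\]
Bounding $\tfrac12 \Prob*{\Delta_1(N) = 0} \leq \Prob*{\Delta_1(N) = 0}$ and recombining yields $\Rmaj(N,q) \leq \Prob*{\Delta_1(N) \leq 0}$ for every $N$; taking $\limsup_{N \to \infty}$ on both sides then gives the claim.

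I do not expect a genuine obstacle here: the statement is essentially bookkeeping, since throwing away the tie-breaking coin only loses a factor $\tfrac12$ on the probability of a tie. The one point requiring a little care is the symmetrization step — justifying that conditioning on the root color is legitimate and that it is consistent with the normalization $\Delta_1(1) = 1$ used throughout — after which everything follows directly from the definitions of $\bmaj$ and $\Delta_1$.
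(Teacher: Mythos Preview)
Your proposal is correct and follows essentially the same argument as the paper: both derive the exact identity $\Rmaj(N,q) = \Prob*{\Delta_1(N) < 0} + \tfrac12\Prob*{\Delta_1(N) = 0}$ from the definition of $\bmaj$ and then bound it by $\Prob*{\Delta_1(N) \leq 0}$ before taking the $\limsup$. The only difference is that you make the symmetrization step explicit, whereas the paper already fixed the convention ``call the root color red'' earlier and so omits it.
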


\begin{proof}
If $\Delta_1(N)$ is negative, the majority estimator on $N$ vertices is wrong. If $\Delta_1(N)$ is zero, it takes a random guess. This holds for any $N \in \N_1$ and the claim follows via
\[\Rmaj(N,q) =  \Prob*{\Delta_1(N) < 0} + \frac{1}{2}\Prob*{\Delta_1(N) = 0} \leq \Prob*{\Delta_1(N) \leq 0}. \addtag\]
\end{proof}

The random walk $(\Delta_1(n))_{n \in \N_1}$ is not only inhomogeneous in both time and space, it also exhibits vanishing drift.
We have
\[\Exp*{D_1(n)} = (1-2q) \frac{\Delta_1(n) + \alpha \Delta_2(n)}{\Zan n}.\]
First note that if $q=\frac12$, $\Exp*{D_1(n)} = 0$ for all $n$ and $\Delta_1(n)$ performs a simple random walk. Then, \[\limsup_{N \to \infty} \Prob*{\Delta_1(N) \leq 0} = \frac12.\] If $q \neq \frac12$, it determines the direction of the drift in combination with the current sign of $\Delta_1(n)+\alpha\Delta_2(n)$. A small $q$ induces a self-reinforcing drift in the same direction as $\sgn(\Delta_1(n)+\alpha\Delta_2(n))$, whereas a large $q$ will cause a self-weakening effect. The actual position of $\Delta_1(n)+\alpha \Delta_2(n)$ controls the strength of the drift. However, note that $\abs*{\Delta_1(n)+\alpha\Delta_2(n)} \leq \Zan n$ - and in fact, $\abs*{\Delta_1(n)+\alpha\Delta_2(n)} \in o(\Zan n)$ with high probability. In this situation, $\Exp*{D_1(n)}$ will converge to zero as $n$ grows towards infinity and we must control the speed at which this drift disappears. We achieve this via a supermartingale argument first presented by Menshikov and Volkov~\cite{MV08}, where we show that with high probability, we enter an escape regime in which the drift vanishes slowly enough for superdiffusive behavior to occur.

\subsection{Color majority as a Pólya urn with randomized replacement}\label{subsec:polya}

One can also imagine the above process as a \emph{Pólya urn} with randomized replacement: At each timestep, we draw a ball of one color from the urn and add a new ball of the same color with probability $1-q$ or one of the opposite color with probability $q$.
To formalize this, we use the notation from~\cite{Jans04} which is also used by e.g.~\cite{ADLV22,DHW21}. 
Generally, an $m$-type Pólya urn process is given by
\[X(n) = \big( (X_{i,n})_{i=1}^m\big)_ {n \in \N},\] where $X_{i,n}$ is the random
variable describing the amount of balls of type $i$ in the urn at time $n$.
The evolution of the Pólya process is given by the \emph{replacement vectors} $\xi_j$ - if a ball of type $j$ is drawn at time $n$, then
\[X_{i,n+1} = X_{i,n} + \xi_{j,i}.\]
Again, we need to represent both the amount of vertices of each color and the respective attachment weights to have the full picture. In the Pólya urn model we achieve this by associating two types to each color, a \emph{weight type} and a \emph{count type}. The weight types $\{r_w, b_w\}$ should fulfill that the total amount of balls of one type is proportional to the entire attachment weight of the represented color, while the count types $\{r_c,b_c\}$ count the red, respectively blue, vertices. As~\cite{DHW21}, we set the
activities of these types to $a_{r_w} = a_{b_w} = 1$, $a_{r_c} = a_{b_c} =
0$ and number them $r_w = 1$, $b_w = 2$, $r_c = 3$, $b_c = 4$. As is well known, the \emph{(expected) replacement matrix} $A$ given by
\[A \coloneqq \left(a_j\Exp*{\xi_{j,i}}\right)_{i,j}\] is
quite important for the analysis of the Pólya urn process. Note that we put the expected replacement vectors in the columns of the matrix as in~\cite{Jans04}.
For very simple increasing trees, this associated Pólya urn has the following
expected replacement matrix~\cite{DHW21,ADLV22}:
\[\addtag \label{eq:outdeg-replmatrix}
  A_{\text{vs}} = \begin{pmatrix*}[c]
    \alpha +1 -q & q & 0 & 0 \\
    q & \alpha +1 - q & 0 & 0 \\
    1 -q & q & 0 & 0 \\
    q & 1-q & 0 & 0
  \end{pmatrix*}
\]
and initial vector $X(0) = (1, 0, 1, 0)$.
For shape exchangeable trees, we follow the same modelling idea, but with a
slightly different replacement rule: After time $1$, each new vertex starts with attachment weight $\alpha +1$ instead of $1$, which changes the expected replacement matrix to
\[\addtag \label{eq:deg-replmatrix}
  A_{\text{se}} = \begin{pmatrix*}[c]
    \alpha +(1-q)(\alpha+1) & q(\alpha+1) & 0 & 0 \\
    q(\alpha+1) & \alpha +(1-q)(\alpha+1) & 0 & 0 \\
    1 -q & q & 0 & 0 \\
    q & 1-q & 0 & 0.
  \end{pmatrix*}
\]
The initial vector $X(0) = (1, 0 , 1, 0)$ is the same since the root has degree $0$ at time $1$. Then, for both tree models, we can write $\Delta(n)$ from~\cref{def:delta-n} as
\[\Delta(n) = \bigcolvec{X_{3,n} - X_{4,n}}{(X_{1,n} - X_{3,n}) - (X_{2,n} - X_{4,n})}. \addtag\]

\cref{thm:bmaj-impossibility} is proven in~\cref{sec:polya}. The proof of~\cref{thm:bmaj-bound} is given in~\cref{sec:randomwalk}.

\section{Proving~\texorpdfstring{\cref{thm:bmaj-impossibility}}{Theorem 5}}\label{sec:polya}
To apply the convergence results from~\cite[Thms.
  3.22-3.24]{Jans04} (see also~\cite[Thm. 3.1]{DHW21}), we need to check that the expected replacement matrices fulfill the necessary assumptions~\cite[(A1)-(A8)]{DHW21}:

\begin{lemma}\label{thm:polya-conv}
The Pólya urns described in~\cref{subsec:polya} with expected replacement matrices given in~\cref{eq:outdeg-replmatrix,eq:deg-replmatrix} exhibit the following convergence behavior: Let $\lambda_1 > \lambda_2$ be the first two eigenvalues. Then,
  \begin{enumerate}
  \item \label{eq:polyaconv-1} if $\lambda_1 = 2\lambda_2$:
    \[\frac{X(N) - N\lambda_1v_1}{\sqrt{N\ln(N)}}
      \conv{d} \mathcal{N}(0, \Sigma_I) \addtag \]
  \item \label{eq:polyaconv-2} and if $\lambda_1 > 2\lambda_2$:
    \[\frac{X(N) - N\lambda_1v_1}{\sqrt{N}}
      \conv{d} \mathcal{N}(0,\Sigma_{II}) \addtag \]
  \end{enumerate}
  with $\Sigma_{I,II}$ as defined in~\cite{Jans04} or~\cite[Section 3]{DHW21}.
\end{lemma}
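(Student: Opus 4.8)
The plan is to check that each of the two urns satisfies the hypotheses (A1)--(A8) of \cite{DHW21} --- which are phrased so as to allow types of zero activity, as we need for the count types --- and then to read the two limit laws directly off \cite[Thms.~3.22--3.24]{Jans04} (equivalently \cite[Thm.~3.1]{DHW21}). For very simple increasing trees the matrix $A_{\text{vs}}$ of \cref{eq:outdeg-replmatrix} is exactly the urn already analysed in \cite{DHW21,ADLV22}, so that case can be quoted; the new work is the entirely parallel verification for the shape exchangeable matrix $A_{\text{se}}$ of \cref{eq:deg-replmatrix}, which has the same shape.

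First I would exploit the block form. Both $A_{\text{vs}}$ and $A_{\text{se}}$ are block lower triangular, $A=\begin{pmatrix}B & 0\\ C & 0\end{pmatrix}$, with a $2\times2$ ``weight'' block $B$ in the top-left corner, a $2\times2$ block $C$ recording the count-ball increments produced when a weight ball is drawn, and a zero block for the activity-$0$ count types; hence $\mathrm{spec}(A)=\mathrm{spec}(B)\cup\{0,0\}$. For very simple increasing trees $B=\begin{pmatrix}\alpha+1-q & q\\ q & \alpha+1-q\end{pmatrix}$ has eigenvalues $\lambda_1=\alpha+1$ (eigenvector $(1,1)$) and $\lambda_2=\alpha+1-2q$ (eigenvector $(1,-1)$); for shape exchangeable trees $B=\begin{pmatrix}2\alpha+1-q(\alpha+1) & q(\alpha+1)\\ q(\alpha+1) & 2\alpha+1-q(\alpha+1)\end{pmatrix}$ has eigenvalues $\lambda_1=2\alpha+1$ and $\lambda_2=2\alpha+1-2q(\alpha+1)$. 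In both cases $\lambda_1>0$ for every allowed $\alpha$ (the single exception $\alpha=-\tfrac12$ for shape exchangeable trees forces the tree to be a path and is trivial), $\lambda_1$ is simple, and the remaining eigenvalues $0$ and $\lambda_2$ have strictly smaller real part. A one-line computation then gives $\lambda_1=2\lambda_2\Leftrightarrow q=f(\alpha)$ and $\lambda_1>2\lambda_2\Leftrightarrow q>f(\alpha)$ (for $q>2f(\alpha)$ one has $\lambda_2<0$, so the genuine ``second eigenvalue'' is $0$, but $\lambda_1>0=2\cdot0$ keeps us in the latter regime); this is precisely the stated dichotomy and it fixes the normalisations $\sqrt{N\ln N}$ and $\sqrt N$.

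Next I would dispatch the remaining hypotheses, which are routine given the explicit $4\times4$ matrices. Tenability --- nonnegativity of the off-diagonal replacement entries together with the impossibility of driving any coordinate negative --- holds because $q\in[0,1]$ and because the constraint $\alpha\in\{-1/d:d\in\N_{>1}\}\cup[0,\infty)$ caps the out-degree (respectively degree) of every vertex, exactly as arranged in \cite{DHW21}. The activity vector $(1,1,0,0)$ is nonnegative with a positive entry; the Perron right eigenvector of $\lambda_1$ is proportional to $(1,1,\tfrac1{\lambda_1},\tfrac1{\lambda_1})$, which after the normalisation $a\cdot v_1=1$ is positive on the active coordinates, while the matching left eigenvector is $(1,1,0,0)$, positive where it must be; the starting composition $X(0)=(1,0,1,0)$ satisfies $u_1\cdot X(0)=1>0$; and the second-moment condition on the replacements is immediate since they are bounded. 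Feeding all of (A1)--(A8) into \cite[Thms.~3.22--3.24]{Jans04} yields the two displayed central limit theorems with $\Sigma_I,\Sigma_{II}$ as defined there.

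The point that needs care is ruling out exactly the degeneracies that (A1)--(A8) forbid. Simplicity and strict dominance of $\lambda_1$ can fail only at the shape-exchangeable boundary $\alpha=-\tfrac12$, which must be excluded; and in the $\sqrt{N\ln N}$ regime the hypotheses require every eigenvalue of real part $\lambda_1/2$ to be semisimple --- which holds because $B$ is symmetric, so $\lambda_2$ is semisimple, and because $0\neq\lambda_1/2$. I would also note that at $q=2f(\alpha)$ the eigenvalue $0$ of the full matrix becomes defective, but there we are strictly in the $\sqrt N$ regime, where a strictly subcritical eigenvalue of any Jordan type is harmless. Writing out this short case distinction confirms that the cited theorems apply verbatim; the rest is bookkeeping with the $4\times4$ matrices.
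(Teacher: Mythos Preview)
Your proposal is correct and follows essentially the same approach as the paper: verify that the urns satisfy the hypotheses (A1)--(A8) of \cite{DHW21} by computing the spectra of $A_{\text{vs}}$ and $A_{\text{se}}$ via their block lower-triangular structure, then invoke \cite[Thms.~3.22--3.24]{Jans04}. The paper's own proof is more terse---it simply records the eigenvalues (including the non-diagonalizable cases $q=\tfrac{\alpha+1}{2}$ and $q=\tfrac{2\alpha+1}{2(\alpha+1)}$) and defers the remaining checks to \cite{DHW21}---whereas you spell out the Perron data, tenability for negative $\alpha$, and the edge cases $\lambda_2\le 0$ and $\alpha=-\tfrac12$ explicitly; but the underlying argument is the same.
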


\begin{proof}
If $q \neq \frac{\alpha + 1}{2}$, $A_{\text{vs}}$ is diagonalizable with eigenvalues $\Lambda= \{\alpha + 1, {\alpha + 1 - 2q}, 0, 0\}$. If \mbox{$q = \frac{\alpha+1}{2}$,} $A_\text{vs}$ has eigenvalues $\{\alpha+1, 0, 0\}$. Similarly, $A_\text{se}$ is diagonalizable for $q \neq \frac{2\alpha + 1}{2(\alpha +1)}$ with eigenvalues \mbox{$\Lambda = \{2\alpha+1, 2\alpha + 1-2q-2\alpha q, 0, 0\}$.} If $q = \frac{2\alpha +1}{2(\alpha+1)}$, then $\Lambda = \{\frac{2\alpha^2 + 3\alpha + 1}{\alpha +1}, 0, 0\}$. One easily checks that the remaining conditions hold for both matrices (see also~\cite{DHW21}). 
\end{proof}

As we can see in~\cref{thm:polya-conv}, the ratio between $\lambda_1$ and $\lambda_2$ is essential in determining the convergence
behavior. Note that  
\begin{equation}
  \label{eq:lambda-q-relation}
  \lambda_1 \geq 2\lambda_2 \iff q \geq f(\alpha) = \begin{cases} \frac{\alpha + 1}{4} & \text{ for very simple inc. trees} \\
    \frac{2 \alpha + 1}{4(\alpha + 1)} & \text{ for shape ex. trees.}
    \end{cases}
\end{equation}
Additionally, we remark that whether the matrices are diagonalizable or not, the first right eigenvector, $v_1$, always fulfills $v_{1,3} = v_{1,4} = 1$.
With this,~\cref{thm:bmaj-impossibility} follows directly:

\begin{proof}[Proof of~\cref{thm:bmaj-impossibility}]
  For $q \geq f(\alpha)$ let
   \[g(N) = \begin{cases}
      \sqrt{N} & \text{ if $\lambda_1 = 2\lambda_2$} \\
      \sqrt{N \ln(N)} & \text{ if $\lambda_1 > 2\lambda_2$}
  \end{cases} \addtag \]
  and define
  \begin{equation}
    \widetilde{X}_{3,N} \coloneqq{} \frac{X_{3,N} -N\lambda_1}{g(N)},\quad \widetilde{X}_{4,N} \coloneqq{} \frac{X_{4,N} - N\lambda_1}{g(N)},
  \end{equation}
  Then $(X(N) - N\lambda_1 v_1)/g(N)$ converges jointly to a normal distribution, implying (since $v_{1,3} = 1 = v_{1,4}$ as mentioned above)
\[\colvec{\widetilde{X}_{3,N}}{\widetilde{X}_{4,N}} \conv{d} \colvec{\widetilde X_3}{\widetilde X_4} \sim \mathcal{N}(0, \Sigma') \addtag\]
where calculating the covariance matrices $\Sigma_{I,II}$ gives
  \[\Sigma' = \sigma(\alpha,q)
    \begin{pmatrix*}[c]
      1 & -1 \\
      -1 & 1
    \end{pmatrix*} \addtag
  \]
  for both tree models. With this covariance structure, $\widetilde{X}_3 - \widetilde{X}_4$ is also normal-distributed with mean $0$. Analogous to~\cref{thm:Rmaj-Delta}, it holds that \[\liminf_{N \to \infty} \Rmaj(N, q) \geq \liminf_{N \to \infty} \Prob*{\Delta(N) < 0},\] giving~\cite{ADLV22}
  \begin{align*}
    \liminf_{N \to \infty} \Rmaj(N,q) \geq{}& \liminf_{N \to \infty} \Prob*{\Delta(N) < 0} \\
    ={}& \liminf_{N \to \infty} \Prob*{\widetilde X_{3,N} - \widetilde X_{4,N} < 0}\\
    ={}& \Prob*{\widetilde{X}_3 - \widetilde{X}_4 \leq 0} = \frac12 \addtag \\
  \intertext{and similarly}
    \limsup_{N \to \infty} \Rmaj(N,q) = \Rmaj(q) \leq{}& \limsup_{n \to \infty} \Prob*{\Delta(N) \leq 0} = \frac12. \addtag
  \end{align*}
  Therefore, $\Rmaj(q) = \frac12$ for $q \geq f(\alpha)$ follows.
\end{proof}

\section{Proving~\texorpdfstring{\cref{thm:bmaj-bound}}{Theorem 6}}\label{sec:randomwalk}
The proof of ~\cref{thm:bmaj-bound} uses the random walk model presented in~\cref{subsec:rw}. We recall~\cref{thm:Rmaj-Delta} and investigate the event $\{\Delta_1(N) > 0\}$, which implies correctness of the majority estimator on $N$ vertices.

\begin{definition}\label{def:parameters2}
Let 
\begin{align*}
    M_2 \coloneqq{}& \max_{\omega}{} (D_1(n) + \alpha D_2(n))^2(\omega). \addtag
\end{align*}
\end{definition}

\begin{definition}\label{def:parameters1}
Set
\[\beta = \begin{cases}
    \alpha + \frac23 & \text{ for very simple inc. trees} \\
    \frac32 \alpha + \frac34 & \text{ for shape ex. trees}
\end{cases}\]
and define, for $\tilde c_\alpha > 0$, 
\[B \coloneqq \sqrt{\frac{3M_2 + \tilde c_\alpha}{\Zan(2\beta - \Zan)}}. \addtag \]
For $\gamma \in (0, \frac12)$, $q \in (0,1]$ set \[A \coloneqq q^{\gamma-1/2} > 1.\]
\end{definition}

\begin{remark}\label{rem:A>B>1}
    Note that for small enough values of $q$, $A > B$ holds. Further, $B$ is well-defined for $n > 1$ since $2\beta - \Zan > 0$ for all $\alpha$ in the allowed ranges of the respective model. Finally, $B > 1$ by choosing $\tilde c_\alpha$ large enough.
\end{remark}

\begin{remark}\label{rem:Bconv}
With
\begin{align*}
& \lim_{N \to \infty} \ZaN \eqqcolon Z_\alpha = \begin{cases}
      \alpha+1 & \text{ for very simple inc. trees} \\
      2\alpha+1 & \text{ for shape ex. trees.}
    \end{cases}, \addtag \\
\intertext{it holds that}
& \lim_{N \to \infty} B =  \sqrt{\frac{3M_2 + \tilde c_\alpha}{Z_\alpha(2\beta - Z_\alpha)}}. \addtag
\end{align*}
\end{remark}

\begin{definition}
 We define the following stopping times for any $A > B > 1$:
  \begin{align*}
    \taugood(A) \coloneqq{}& \inf \Set*{n > 0 \given \Delta_1(n) + \alpha\Delta_2(n) > A\Zan \sqrt{n}} \\
    \taubad(B) \coloneqq{}& \inf \Set*{n > \taugood \given \Delta_1(n) + \alpha\Delta_2(n) \leq B\Zan \sqrt{n}}. \addtag
  \end{align*}
\end{definition}

With this notation, there exists $N_0>0$ such that for all $N>N_0$ 
\begin{align*}\label{eq:25}
  & \Prob*{\Delta_1(N) > 0} \\ \geq{}& \Prob{\Delta_1(N) > 0 \nonscript\:\vert\nonscript\: \taugood(A) \leq N, \taubad(B) > N}\Prob{\taugood(A) \leq N, \taubad(B) > N}. \addtag
\end{align*}

\begin{figure}
    \begin{subfigure}[c]{0.45\textwidth}
    \includegraphics[width=0.95\textwidth]{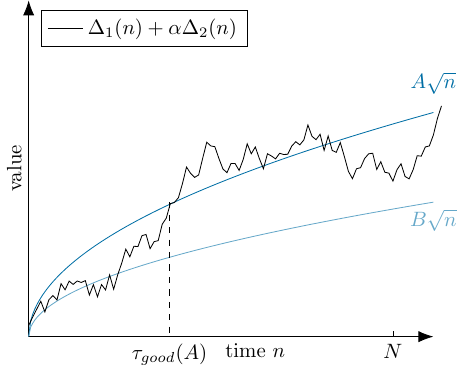}
        \caption{}
        \label{subfig:evt2}
    \end{subfigure}
    \hspace*{3em}
    \captionsetup[subfigure]{oneside,margin={1em,0cm}}
    \begin{subfigure}[c]{0.45\textwidth}
    \includegraphics[raise=0.4em,width=0.95\textwidth]{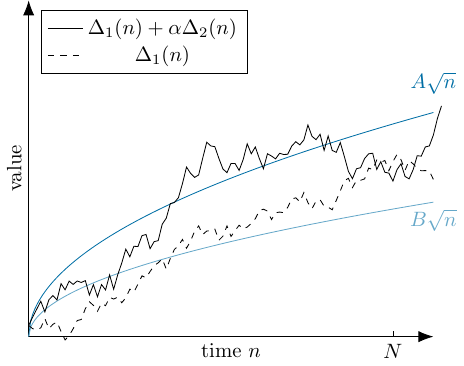}
        \caption{}
        \label{subfig:evt1}
    \end{subfigure}
    \caption{Illustrating the two events in~\cref{eq:25}. In (a), the combined process ${\Delta_1(n) + \alpha \Delta_2(n)}$ passes the $\taugood(A)$-boundary before the time horizon $N$ and does not drop below the $\taubad(B)$-boundary again. In (b), we have the same behavior as in (a) and additionally see that the isolated process $\Delta_1(n)$ is above zero at time $N$.}
    \label{fig:taugood-taubad}
\end{figure}

\begin{lemma}\label{lem:1-bound}
  For all $q \in (0,1]$ and each allowed value of $\alpha$, there exists $c_{\alpha,1} > 0$ such that \[\liminf_{N \to \infty}{}\: \Prob{\Delta_1(N) > 0 \given \taugood(A) \leq N, \taubad(B) > N} \geq 1 - c_{\alpha,1}\sqrt{q}. \addtag \]
\end{lemma}

\begin{lemma}\label{lem:2-bound}
  For  all $q \in (0,1]$ and each allowed value of $\alpha$, there exists $c_{\alpha,2} > 0$ such that \[\liminf_{N \to \infty}{}\: \Prob{\taugood(A) \leq N, \taubad(B) > N} \geq 1 -
    c_{\alpha,2}\sqrt{q}. \addtag \]
\end{lemma}

Together,~\cref{lem:1-bound,lem:2-bound} imply our theorem.

\begin{proof}[Proof of~\cref{thm:bmaj-bound}]
Firstly, note that for $q = 0$, it holds that $\Delta_1(n) = n$ for all $n \in \N$ and the theorem follows immediately. For $q > 0,$
  \begin{align*}
    &\liminf_{N \to \infty} \Prob*{\Delta_1(N) > 0} \\ \geq{}& \liminf_{N \to \infty} \left(\Prob{\Delta_1(N) > 0 \given \taugood(A) \leq N, \taubad(B) > N}\cdot \Prob{\taugood(A) \leq N, \taubad(B) > N} \right) \\
    \geq{}& (1-c_{\alpha,1}\sqrt{q})(1-c_{\alpha,2}\sqrt{q}) \\
    \geq{}& 1 - c_\alpha\sqrt{q}. \addtag
  \end{align*}
  With $1 - \Rmaj(q) \geq \liminf_{N \to \infty} \Prob*{\Delta_1(N) > 0}$, the claim follows.
\end{proof}

\begin{remark}
    By keeping track of the constants needed in the proofs, we find that $c_\alpha$ is increasing in $\alpha$ for $\alpha > 0$. Intuitively, the difference between $\Delta_1$ and $\alpha \Delta_2$ grows with $\alpha$, making it harder to control $\Delta_1$ based on the sum of the two processes.
\end{remark}

In the following subsections, we first prove~\cref{lem:2-bound} and then~\cref{lem:1-bound}.
\subsection{Proving~\texorpdfstring{\cref{lem:2-bound}}{Lemma 13}}\label{sec:lem-supmart}
To prove~\cref{lem:2-bound}, we adopt a line of argumentation presented by Menshikov
and Volkov~\cite{MV08} and consider the auxilliary process
\[Y(n) \coloneqq \frac{n}{(\Delta_1(n) + \alpha\Delta_2(n))^2}, \addtag \]
which is adapted to $\cF(n)$, the filtration generated by $\Delta(n)$.

\begin{lemma}\label{lem:Y-supmart}
  For any $\gamma \in (0, \frac12)$ there
  exists a threshold $q_0$ such that for all ${0 < q < q_0}$:
  $A > B >
  1$ and the stopped process $Y(\taugood(A)
  \lor n \land \taubad(B))$ is a nonnegative supermartingale on $\cF(n)$.
\end{lemma}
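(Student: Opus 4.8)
The plan is to reduce everything to the one-step inequality $\Exp*{Y(n+1) \given \cF(n)} \le Y(n)$ on the region where the stopped process is not frozen, namely $\{\taugood(A) \le n < \taubad(B)\}$, and to dispatch $A>B>1$ and nonnegativity separately. Throughout, write $W(n) \coloneqq \Delta_1(n) + \alpha\Delta_2(n)$, so $Y(n) = n/W(n)^2$; write $M(n) \coloneqq Y(\taugood(A)\lor n\land\taubad(B))$; and write $\xi \coloneqq W(n+1) - W(n) = D_1(n) + \alpha D_2(n)$, which is bounded with $\xi^2 \le M_2$ by \cref{def:parameters2}. Two preliminaries. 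First, since $A = q^{-(1/2-\gamma)} \to \infty$ as $q\downarrow 0$ while $B$ stays bounded uniformly in $n$ (its limit is the constant of \cref{rem:Bconv}, and the quantities entering \cref{def:parameters1} remain in a compact subset of $(0,\infty)$ for each allowed $\alpha$), there is $q_0^{(1)}$ with $A > \sup_n B \ge 1$ for $q < q_0^{(1)}$. Second, deterministically $\abs{W(n)} \le (1+2\abs\alpha)n$ (as $\abs{\Delta_1(n)}\le n$ and $\abs{\Delta_2(n)}$ is at most twice the number of edges), whereas at $n = \taugood(A)$ one has $W(n) > A\Zan\sqrt n \ge A z_\alpha \sqrt n$ with $z_\alpha \coloneqq \inf_m Z_\alpha(m) > 0$; hence $\taugood(A) \ge c'_\alpha A^2 = c'_\alpha q^{-(1-2\gamma)}$ for a constant $c'_\alpha>0$, so on $\{\taugood(A)\le n\}$ the index $n$ exceeds any prescribed threshold $n_0$ as soon as $q < q_0^{(2)}(n_0)$. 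This is what makes the error terms below uniformly negligible.

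The heart of the argument is the drift estimate. On $\{\taugood(A)\le n<\taubad(B)\}$ one has $W(n) > B\Zan\sqrt n > 0$, so $Y(n)\ge0$ and, for $n$ large, $\abs{\xi/W(n)} \le \sqrt{M_2}/(B\Zan\sqrt n) < \tfrac12$. From the explicit increment laws of \cref{subsec:rw} together with $\Prob*{n+1 \sim \text{red vertex} \given \cF(n)} = \tfrac12\bigl(1 + \tfrac{W(n)}{\Zan n}\bigr)$, a direct computation gives, in both models,
\[ \Exp*{\xi \given \cF(n)} = \frac{W(n)}{\Zan n}\,h(\alpha,q), \qquad \Exp*{\xi^2 \given \cF(n)} \le M_2, \]
where $h$ is an explicit affine function of $q$ with $h(\alpha,0) = Z_\alpha$ (namely $h = 1+\alpha-2q$ for v.s.i.\ trees and $h = (1+2\alpha)-2q(1+\alpha)$ for s.e.\ trees). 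Inserting $x = \xi/W(n)$ into the Taylor bound $(1+x)^{-2} \le 1 - 2x + 3x^2 + C_0\abs{x}^3$, valid for $\abs x\le\tfrac12$ (we expand to third order precisely so the coefficient of $x^2$ is the sharp value $3$), taking conditional expectations and multiplying by $n+1$ yields
\[ \Exp*{Y(n+1) \given \cF(n)} \le \frac{n+1}{W(n)^2}\left(1 - \frac{2h}{\Zan n} + \frac{3M_2}{W(n)^2} + \frac{C_0 M_2^{3/2}}{W(n)^3}\right). \]
So it suffices to prove, on the active region,
\[ \frac{2h}{\Zan}\cdot\frac{n+1}{n} \ \ge\ 1 + \frac{3M_2(n+1)}{W(n)^2} + \frac{C_0 M_2^{3/2}(n+1)}{W(n)^3}. \]

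It remains to check this inequality, and the definitions of $B$ and $q_0$ are tailored for it. Using $W(n)^2 > B^2\Zan^2 n$ together with $B^2\Zan^2 = (3M_2+\tilde c_\alpha)\Zan\big/\bigl(\alpha(1+\tfrac1n)+\tfrac12\bigr)$ from \cref{def:parameters1}, the quadratic term is at most $\tfrac{3M_2(\alpha(1+1/n)+1/2)}{(3M_2+\tilde c_\alpha)\Zan}(1+\tfrac1n)$ and the cubic term is $O(n^{-1/2})$. Substituting the explicit $h$ and $\Zan$ and collecting the $n$-dependent pieces into a single error term $o(1)$ as $n\to\infty$, the inequality reduces to $\delta_\alpha \ge C_\alpha q + o(1)$, where for v.s.i.\ trees $C_\alpha = 4$ and
\[ \delta_\alpha = (\alpha+1) - \frac{3M_2(\alpha+1/2)}{3M_2+\tilde c_\alpha} = \frac{3M_2/2 + \tilde c_\alpha(\alpha+1)}{3M_2+\tilde c_\alpha} > 0, \]
a strictly positive constant for every allowed $\alpha$ since $\tfrac{\alpha+1/2}{\alpha+1}<1$; the analogous computation gives $\delta_\alpha>0$ and some constant $C_\alpha$ in the s.e.\ case as well. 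Choosing $n_0$ so that $\abs{o(1)} < \delta_\alpha/2$ for $n\ge n_0$, and then $q_0 \coloneqq \min\{q_0^{(1)},\, q_0^{(2)}(n_0),\, \delta_\alpha/(2C_\alpha)\}$, the inequality holds on the active region for all $q<q_0$, which gives $\Exp*{Y(n+1)\given\cF(n)} \le Y(n)$ there. The supermartingale property then follows by splitting into $\{n<\taugood(A)\}$ (both $M(n)$ and $M(n+1)$ equal $Y(\taugood(A))$), $\{\taugood(A)\le n<\taubad(B)\}$ (the drift estimate), and $\{n\ge\taubad(B)\}$ (both equal $Y(\taubad(B))$); nonnegativity holds since $W(n)\ne0$ on the active region, and $A>B>1$ was the first preliminary.

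I expect the third and fourth steps above — making the asymptotics line up — to be the real obstacle: the leading drift gain $2h/\Zan$ must beat $1$ by exactly the margin that absorbs the $\tfrac{n+1}{n}$ factors, the $O(n^{-1/2})$ Taylor remainder and the $O(q)$ defect of $h$ below $Z_\alpha$ simultaneously, which is precisely why a strictly positive $\tilde c_\alpha$ (it is what forces $\delta_\alpha>0$ through the definition of $B$), a small enough $q_0$, and the lower bound $n\gtrsim q^{-(1-2\gamma)}$ on the active region are all needed together. A minor technical caveat is that $M(n)$ is not $\cF(n)$-measurable while $n<\taugood(A)$, so ``supermartingale on $\cF(n)$'' is to be read via the one-step inequalities just verified; equivalently, the process observed from the random time $\taugood(A)$ is an $(\cF(n))_{n\ge\taugood(A)}$-supermartingale, which is all that \cref{lem:2-bound} uses.
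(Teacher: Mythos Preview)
Your argument is correct and follows the same overall strategy as the paper: Taylor-expand $(1+\xi/W(n))^{-2}$, compute $\Exp{\xi\mid\cF(n)} = \rho\,W(n)/(\Zan\,n)$ (your $h$ is the paper's $\rho$), bound $\Exp{\xi^2\mid\cF(n)}\le M_2$, and use the formula for $B$ to close the drift inequality on the active region. The execution diverges in two places. First, the paper controls the Lagrange remainder case by case (nonpositive for $x>0$, bounded by $\tilde c_\alpha/W(n)^2$ for $x<0$), so the remainder is absorbed into the quadratic term and the sufficient condition becomes the exact inequality $Y(n)\le(2\rho/\Zan-1)/(3M_2+\tilde c_\alpha)$, which is then verified directly for \emph{every} $n$ in the active region from the definition of $B$; you instead keep a uniform $C_0\lvert x\rvert^3$ remainder, incur an $O(n^{-1/2})$ error, and dispatch it asymptotically via the additional observation $\taugood(A)\gtrsim A^2=q^{-(1-2\gamma)}$, which the paper does not need here. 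Second, in the paper $\tilde c_\alpha$ \emph{is} the remainder constant and so genuinely feeds into the choice of $B$; in your route it is inert, since your $\delta_\alpha=\tfrac{3M_2/2+\tilde c_\alpha(\alpha+1)}{3M_2+\tilde c_\alpha}\ge\tfrac12$ for every $\tilde c_\alpha\ge0$ and every allowed $\alpha$, so the parenthetical that $\tilde c_\alpha$ ``forces $\delta_\alpha>0$'' is not accurate (though harmless). Your closing caveat about measurability of $M(n)$ for $n<\taugood(A)$ is well taken and matches how the lemma is actually invoked, via optional stopping against $\cF(\taugood)$.
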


\begin{proof}
By~\cref{rem:A>B>1}, we have a $\hat q_0$ such that for all $0 < q < \hat q_0$, $A > B > 1$. For such $q$, the event that $\taugood(A) \leq N$ and $\taubad(B) > N$ is as indicated by~\cref{subfig:evt2}. As we only consider ${\taubad(B) \geq n\geq \taugood(A)}$, the process ${\Delta_1(n) + \alpha\Delta_2(n)}$ is nonzero and $Y(n)$ is well-defined on these $n$. We prove \[\Exp*{Y((\taugood(A) \lor n+1 \land \taubad(B))-Y((\taugood(A)
  \lor n \land \taubad(B)) \given \cF(n)} \leq 0\]
for sufficiently small values of $q$. 

If $n \land \taubad(B) = \taubad(B)$,
\begin{align*}
  Y( (n+1) \land \taubad(B)) - Y(n \land \taubad(B)) ={}& Y(\taubad(B)) - Y(\taubad(B)) = 0. \addtag 
\end{align*}
For $n \land \taubad(B) = n$,
\begin{align*}
  &Y((n+1) \land \taubad(B)) - Y(n \land \taubad(B))\\ ={}& Y(n+1) - Y(n) \\
  ={}&\frac{n+1}{(\Delta_1(n+1) + \alpha\Delta_2(n+1))^2} - \frac{n}{(\Delta_1(n) + \alpha\Delta_2(n))^2} \\
  ={}& \frac{n+1}{(\Delta_1(n) + D_1(n) + \alpha\Delta_2(n) + \alpha D_2(n))^2} - \frac{n}{(\Delta_1(n) + \alpha\Delta_2(n))^2} \\
  ={}& \frac{1}{(\Delta_1(n)+\alpha\Delta_2(n))^2} \left( \frac{n+1}{(1 + \frac{D_1(n) + \alpha D_2(n)}{\Delta_1(n)+\alpha\Delta_2(n)})^2} - n\right) \\
  ={}& \frac{n+1}{(\Delta_1(n)+\alpha\Delta_2(n))^2} \left( \frac{1}{(1 + \frac{D_1(n) + \alpha D_2(n)}{\Delta_1(n)+\alpha\Delta_2(n)})^2} - \frac{n}{n+1}\right). \addtag
\end{align*}
Taking the conditional expectation with respect to $\mathcal{F}(n)$, we have
\begin{align*}\label{Eq.lisa.2}
  & \Exp*{Y(n+1) - Y(n) \given\mathcal{F}(n)}\\ 
  ={}& \frac{n+1}{(\Delta_1(n)+\alpha\Delta_2(n))^2} \Exp*{\frac{1}{(1 + \frac{D_1(n) + \alpha D_2(n)}{\Delta_1(n)+\alpha\Delta_2(n)})^2} - \frac{n}{n+1} \given  \mathcal{F}(n)}. \addtag
\end{align*}
Note that the first factor is always positive. 
Set 
\begin{equation}
 f(x) \coloneqq{} \frac{1}{(1 + x)^2} - \frac{n}{n+1} \text{ for $x > -1$.}
\end{equation}
By a second order Taylor expansion around zero,
%
\begin{align*}
    & f(x) = 1 - \frac{n}{n+1} - 2x + 3x^2 + R_2f(x; 0), \addtag
  \end{align*}
where we bound $R_2f(x;0)$ uniformly for $x > -1$: If $x \geq 0$, there exists $\xi \in [0, x]$ such that
  \begin{align*}
    & R_2f(x;0) = - 24 \left( 1 + \xi \right)^{-5}x^3 \leq 0. \addtag
  \end{align*}
If $-1 <{} x < 0$, there exists $\xi \in [x, 0] \subset (-1,0]$ such that
 \begin{equation}\label{Eq.lisa.1}
     R_2f(x;0) 
    ={} - 24 \left( 1 + \xi \right)^{-5}x^3.
    \end{equation}
With $x = \frac{D_1(n) + \alpha D_2(n)}{\Delta_1(n) + \alpha \Delta_2(n)}$, for $\taugood(A) \leq n \leq \taubad(B)$ it holds that $x$ is bounded away from $-1$ and there exists $\tilde c_\alpha
  > 0$ such that \eqref{Eq.lisa.1} is bounded from above by
  %
%
%
%
 %
  \begin{align*}
     \tilde c_\alpha \left(\frac{1}{\Delta_1(n) + \alpha \Delta_2(n)}\right)^3 
    \leq{}& \tilde c_\alpha \left(\frac{1}{\Delta_1(n) + \alpha \Delta_2(n)}\right)^2, \addtag
  \end{align*}
  where the upper bound follows because ${\Delta_1(n) + \alpha \Delta_2(n) > 0}$ almost surely for $n$ between $\taugood(A)$ and $\taubad(B)$. Thus, for all such $n$ and all
  realisations of $\Delta(n)$ (a.s.),
  \begin{align*}
&f\left(\frac{D_1(n)+\alpha D_2(n)}{\Delta_1(n)+\alpha\Delta_2(n)}\right) \\\leq{}&   1 - \frac{n}{n+1} - 2\frac{D_1(n)+\alpha D_2(n)}{\Delta_1(n)+\alpha\Delta_2(n)} + 3\left( \frac{D_1(n)+\alpha D_2(n)}{\Delta_1(n)+\alpha\Delta_2(n)} \right)^2 +  \tilde c_\alpha \left(\frac{1}{\Delta_1(n) + \alpha \Delta_2(n)}\right)^2. \addtag
  \end{align*}
Hence, the conditional expectation on the righthandside of \eqref{Eq.lisa.2} is bounded from above by
\begin{align*}\label{eq:condexp}
 &   \Exp*{ \frac{1}{n+1} - 2\frac{D_1(n)+\alpha D_2(n)}{\Delta_1(n)+\alpha\Delta_2(n)} + 3\left( \frac{D_1(n)+\alpha D_2(n)}{\Delta_1(n)+\alpha\Delta_2(n)} \right)^2 + \tilde  c_\alpha \left(\frac{1}{\Delta_1(n) + \alpha \Delta_2(n)}\right)^2 \given \mathcal{F}(n)} \nonumber \\
    & 
=    \frac{1}{n+1} - \frac{2 \Exp*{D_1(n) + \alpha D_2(n) \given \cF(n)}}{\Delta_1(n) + \alpha \Delta_2(n)}
    + \frac{3 \Exp*{(D_1(n)+\alpha D_2(n))^2 \given \cF(n)}}{(\Delta_1(n)+\alpha\Delta_2(n))^2}\\
    & \quad + \tilde c_\alpha \left(\frac{1}{\Delta_1(n) + \alpha \Delta_2(n)}\right)^2. \addtag
\end{align*}
The first moment of $D_1(n)+\alpha D_2(n)$ given $\cF(n)$ is 
  \begin{equation}
      \frac{\rho}{\Zan}\frac1n (\Delta_1(n) + \alpha \Delta_2(n)),
  \end{equation}
  with $\rho$ defined as
  \[\rho \coloneqq
    \begin{cases}
      \alpha + 1 - 2q & \text{ for very simple increasing trees} \\
      2\alpha + 1 - 2q(\alpha +1) & \text{ for shape exchangeable trees}
    \end{cases} \addtag
  \]
  and the second moment is bounded from above by $M_2$.
  Therefore,~\cref{eq:condexp} is bounded from above by
\begin{equation}
    \frac{1}{n} - 2\frac{\rho}{\Zan} \frac1n + \frac{3M_2 + \tilde c_\alpha}{(\Delta_1(n)+\alpha\Delta_2(n))^2} = \frac1n \Bigg(\frac{\Zan - 2\rho}{\Zan} + (3M_2 + \tilde c_\alpha) Y(n) \Bigg),
\end{equation}
which gives
  \begin{align*}
    & Y(n) \leq \frac{2\rho-\Zan}{\Zan(3M_2 + \tilde c_\alpha)} 
      \implies{} \Exp*{Y(n+1) - Y(n) \given \cF(n)} \leq 0. \addtag \label{eq:Yn-condition}
  \end{align*}
%
For $n$ between $\taugood(A)$ and
  $\taubad(B)$,
  \[\Delta_1(n) + \alpha \Delta_2(n) > B\Zan \sqrt{n},\addtag\]
  implying
  \begin{equation}
    Y(n) \leq \frac{n}{\left( B\Zan \sqrt{n} \right)^2} = \frac{1}{\left(B\Zan \right)^2}.
  \end{equation}
Further,
  \begin{equation}
      2\rho-\Zan > 0 \iff q < \begin{cases}
          \frac{\alpha(1-\frac1n)+1}{4} & \text{ for very simple increasing trees} \\
          \frac{2\alpha(1-\frac1n)+1}{4(\alpha+1)} & \text{ for shape exchangeable trees,}
      \end{cases}
  \end{equation}
  and with \[B \geq \sqrt{\frac{3M_2 + \tilde c_\alpha}{\Zan(2\rho-\Zan)}} \quad \text{ for } q < \frac16, \addtag\] 
  the claim follows.
\end{proof}

\begin{proof}[Proof of~\cref{lem:2-bound}]
We now use this supermartingale to prove the lower bound for
  \[\Prob{\taugood(A) \leq N, \taubad(B) > N},\]
  where we must both first enter the regime $\Delta_1(n)+\alpha\Delta_2(n) > A\sqrt{n}$ and then not fall below $B\sqrt{n}$ again. Note
  \begin{equation*}\label{eq:circled2}
    \Prob*{\taugood(A) \leq N, \taubad(B) > N} = 1 - \Prob*{\taugood(A) > N} - \Prob*{\taubad(B) \leq N}. \addtag
  \end{equation*}

  The decreasing tendency of $Y(n)$ after $\taugood(A)$ given by~\cref{lem:Y-supmart} implies that $\Delta_1(n)+\alpha\Delta_2(n)$ has an increasing tendency after this point. In particular, $Y(n)$ compares $\Delta_1(n)+\alpha\Delta_2(n)$ to $\sqrt{n}$ which we will use to bound $\Prob*{\taubad(B) \leq N}$. 
  
  In the following, we omit the parameters $A$ and $B$ in the notation whereever they are not relevant. Remember that we consider $Y(n)$ as a process adapted to $\cF(n)$, the natural filtration of $\Delta(n)$. Then, via the definition of $\taugood$ and $\taubad$,
%
\begin{align*}
      Y(\taugood) \leq{}&
    \frac{\taugood}{(AZ_\alpha(\taugood)\sqrt{\taugood})^2} =
    \frac{1}{A^2Z_\alpha(\taugood)^2} \leq \frac{1}{A^2}, \addtag \\
    Y(\taubad) \geq{}&
    \frac{\taubad}{(BZ_{\alpha}(\taubad)\sqrt{\taubad})^2} =
    \frac{1}{B^2Z_\alpha(\taubad)^2}\\ \geq{}&
    \begin{cases}
      \frac{1}{B^2(\alpha+2)^2} & \text{ for very simple increasing trees} \\
      \frac{1}{B^2(2\alpha+2)^2} & \text{ for shape exchangeable trees}.
    \end{cases} \addtag
  \end{align*}
Additionally,
\begin{equation*}
    \Prob*{\taubad \leq N} = \Exp*{\1_{N \wedge \taubad = \taubad}} = \Exp[\big]{\Exp*{\1_{N \wedge \taubad =
          \taubad} \given \cF(\taugood)}}.
\end{equation*}
On the event $\taugood  \leq N$, we get 
by a variant of the optimal stopping theorem~\cite[Theorem 28, Chapter V]{meyer1966probability}
\begin{align*}
  & \Exp*{Y(N \wedge \taubad) \given \cF(\taugood)} \leq Y(\taugood). \addtag \\
  \intertext{Further, since $Y(\taugood \lor n \land \taubad)$ is always positive,}
    & \Exp*{Y(N \wedge \taubad) \given \cF(\taugood)} \\
    ={}& \Exp{Y(N)\1_{N \wedge \taubad = N} + Y(\taubad)\1_{N \wedge \taubad = \taubad} \given \cF(\taugood)} \\
    \geq{}& \Exp*{Y(\taubad)\1_{N \wedge \taubad = \taubad} \given \cF(\taugood)}. \addtag
\end{align*}

Therefore,
\begin{align*}
    & \frac{1}{A^2} \geq \Exp*{Y(\taugood)} \geq \Exp[\big]{\Exp*{\1_{N \wedge \taubad =
          \taubad} \given \cF(\taugood)}} \cdot \begin{cases} \frac{1}{B^2(\alpha+2)^2} & \text{ for very simple increasing trees} \\
              \frac{1}{B^2(2\alpha+2)^2} & \text{ for shape exchangeable trees}
            \end{cases} \\
    \iff{}& \frac{1}{A^2} \geq \Prob*{\taubad \leq N} \cdot \begin{cases} \frac{1}{B^2(\alpha+2)^2} & \text{ for very simple increasing trees} \\
              \frac{1}{B^2(2\alpha+2)^2} & \text{ for shape exchangeable. trees,}
            \end{cases} \addtag
\end{align*}
which by definition of $A$ implies
\begin{equation}\label{eq:taubadbound}
    \Prob*{\taubad \leq N} \leq q^{1-2\gamma} \cdot \begin{cases} B^2(\alpha+2)^2 & \text{ for very simple increasing trees} \\
              B^2(2\alpha+2)^2 & \text{ for shape exchangeable trees.}
            \end{cases}
\end{equation}

  To bound $\Prob*{\taugood(A) \leq N}$, we consider the first point at which
  $\Delta_1(n) + \alpha \Delta_2(n)$ may reach this boundary. For it to happen at time $n_0$, $\Delta_1(n_0) = n_0$, $\Delta_2(n_0) = n_0-1$ must hold, implying $\Delta_1(n_0) + \alpha \Delta_2(n_0) = \Zan n_0$. For both model groups,
  \[Z_\alpha(n_0) n_0 > AZ_\alpha(n_0) \sqrt{n_0} \iff \sqrt{n_0} > A, \addtag \]
  which is fulfilled for $n_0 > A^2$.
  Note that, since
  $\Delta_1(1) = 1$ and, by definition of $D(n)$, $\Delta_2(2) = 1$ a.s., it
  holds that
  \begin{align*}
      & \Prob[\big]{\Delta_1(n_0) + \alpha \Delta_2(n_0) = Z_\alpha(n_0) n_0} = (1-q)^{n_0-1}. \addtag \\
      \intertext{Therefore, the probability that the
  process will not reach the $\taugood(A)$-boundary before time $N$ is bounded from above by}
   & \Prob*{\taugood(A) > N} \leq 1 - (1-q)^{\ceil{A^2}-1} \leq
    (A^2+1)q - q = q^{2\gamma}. \addtag \label{eq:taugoodbound}
  \end{align*}
%
%
Plugging~\cref{eq:taubadbound,eq:taugoodbound} into~\cref{eq:circled2} gives
  \begin{align*}
    \Prob{\taugood(A) \leq N, \taubad(B) > N} \geq{}& \begin{cases} 1 - q^{2\gamma} - B^2(\alpha+2)^2q^{1-2\gamma} & \text{ for very simple increasing trees} \\
     1 - q^{2\gamma} - B^2(2\alpha+2)^2q^{1-2\gamma} & \text{ for shape exchangeable trees.} 
     \end{cases} \addtag 
  \end{align*}
  With~\cref{rem:Bconv} and maximizing the above expression at $\gamma = \frac14$,
  \[\liminf_{N \to \infty}{}\: \Prob{\taugood(A) \leq N, \taubad(B) > N} \geq 1 - c_{\alpha,2}\sqrt{q} \addtag\]
  follows.
\end{proof}

\subsection{Proving~\texorpdfstring{\cref{lem:1-bound}}{Lemma 12}}\label{sec:lem-cheby}
\newcommand{\cC}{\mathcal{C}}
Recall
\begin{equation}
    \Zan \coloneqq
    \begin{cases}
      \alpha(1-\frac1n)+1 & \text{ for very simple increasing trees} \\
      2\alpha(1-\frac1n)+1 & \text{ for shape exchangeable trees.}
    \end{cases}
\end{equation}

\begin{proof}[Proof of~\cref{lem:1-bound} for $\alpha = 0$]
  If $\alpha = 0$, $Z_{\alpha=0}(n) \equiv 1$ and it holds that \begin{align*}
      \taugood(A) ={}& \inf \Set*{n > 0 \given \Delta_1(n) > A\sqrt{n}} \\
      \taubad(B) ={}& \inf \Set*{n > \taugood \given \Delta_1(n) \leq B \sqrt{n}}
  \end{align*}
  and therefore 
  \[\{\taugood \leq N, \taubad > N\} \subset \{\Delta_1(N) >
  0\}, \addtag \label{eq:unifcase} \] 
  which implies $\Prob*{\Delta_1(N) > 0 \given \taugood \leq N, \taubad > N} = 1 \geq 1 - c_{1,\alpha = 0}\sqrt{q}$ for any $c_{1,\alpha=0} > 0$.
\end{proof}

For $\alpha \neq 0$,~\cref{eq:unifcase} does not hold. The core idea of the following argument is that with high enough
probability, $\Delta_1(n)$ and $\Delta_2(n)$ will not stray too far from each other.

\begin{definition}\label{def:additionalRVs}
  Let $[N] \coloneqq \{1, 2, \ldots, N\}$.
  We define the following random variables representing the decisions made by $\Delta(n)$ up to time $N$.
  \begin{itemize}
  \item $\rr([N]), \rb([N]), \br([N]), \bb([N])$:\\
  The first entry in the tuple represents the color of the
    attached-to vertex and the second entry the color of the attaching vertex. Each random variable takes on values in $\mathcal{P}(\Set*{2, \ldots,
      N})$ such that, e.g., $k \in (b,r)([N])$ iff a new red vertex
    attached to an existing blue vertex at the transition from time $k-1$ to
    $k$.
  \item $\rw([N]), \wr([N]), \bw([N]), \wb([N])$:\\
    These random variables are defined as (disjoint) unions of the random variables defined above. For example, $\rw([N]) = \rr([N]) \cup \rb([N])$.
  \end{itemize}
\end{definition}

  Additionally, we define the event
  \[\cA \coloneqq \Set*{\srwN \in \left[\frac{\swrN - Nq - a\sqrt{N}}{1-2q},
        \frac{\swrN - Nq + a\sqrt{N}}{1-2q}\right]}, \addtag \] 
  where $a = \frac{B \ZaN}{4\abs*{\alpha}} > 0$ and we write $\#_S$ to denote the cardinality of a set $S$. Finally, we introduce the shorthand notation $E_\tau$ for the event  $\taugood(A) \leq N, \taubad(B) > N$. 

With these definitions in place, we are ready to prove~\cref{lem:1-bound} for
$\alpha \neq 0$.

\begin{proof}[Proof of~\cref{lem:1-bound} for $\alpha \neq 0$]
We show
\[\liminf_{N \to \infty} \Prob{\cA \given E_\tau} \geq 1 - c_{\alpha,1}\sqrt{q} \quad \text{ and } \quad \Prob{\Delta_1(N) > 0 \given \cA\cap E_\tau} = 1 \addtag,\]
which implies the claim via
\[\Prob{\Delta_1(N) > 0 \given \taugood \leq N, \taubad > N} = \Prob{\Delta_1(N) > 0 \given \cC} \geq \Prob{\Delta_1(N) > 0 \given \cA\cap E_\tau}\Prob{\cA \given E_\tau}. \addtag\]
Note that it is again sufficient to prove both these claims for small values of $q$. It holds that
  \begin{align*}\label{eq:delta1-rewrite}
    \Delta_1(N) ={}& \swrN - \swbN \\
    ={}& \swrN  - (N - \swrN) \\
    ={}& 2\swrN - N \addtag
  \end{align*}
  for both tree models. Further, in the very simple increasing tree case,
  \begin{align*}\label{eq:delta2-vs-rewrite}
    \Delta_2(N) ={}& \srwN - \sbwN \\
    ={}& \srwN  - (N - \srwN) \\
    ={}& 2\srwN - N \addtag
  \end{align*}
  and for shape exchangeable trees,
  \begin{align*}\label{eq:delta2-se-rewrite}
    \Delta_2(N) ={}& 2\srrN + 0\srbN + 0\sbrN - 2\sbbN \\
    ={}& 2\srrN - 2(N - \srbN - \sbrN - \srrN) \\
    ={}& 2\srwN + 2\swrN - 2N. \addtag
  \end{align*}
To lower-bound  $\Prob{\cA \given E_\tau}$, note that $\Prob{\cA \given E_\tau} \geq \Prob{\cA} - \Prob*{E_\tau^c}$ 
and that given the color of the drawn vertex, the color of the new vertex is simply an independent coin flip. Let $\mathbb{P}_i$ be the probability measure conditioned on $\srwN = i$: 
  \begin{align}
  & \E_i \left[\swrN \right] = (1-q)i + q(N-i) = (1-2q)i + Nq \\
  \intertext{and}
  &\Vari_i \left[\swrN \right] = (1-q)qi + (1-q)q(N-i) \leq qi +
    q(N-i) = qN.
  \end{align}
  By definition of $\cA$,
  \begin{align*}
     \mathbb{P}_i \left(\cA^c \right) ={}& \mathbb{P}_i \left(\abs*{\swrN - \E_i \left[\swrN\right]} > a \sqrt{N}\right) \\
     \leq{}& \frac{qN}{a^2N} = \frac{1}{a^2}q \addtag
  \end{align*}
  and thereby
  \begin{equation}
    \mathbb{P}_i \left(\cA \right) \geq 1 - \left(\frac{4\alpha}{B\ZaN}\right)^2 q \geq 1 - \hat c_\alpha q.
  \end{equation}
  uniformly in $i$, bringing us together with~\cref{rem:Bconv} and~\cref{lem:2-bound} to
    \begin{equation}
    \liminf_{N \to \infty} \Prob{\cA \given E_\tau} \geq 1 - (\hat c_\alpha+c_{\alpha,2})\sqrt{q} = 1 - c_{\alpha,1}\sqrt{q}.  \label{eq:first-cond}
  \end{equation}
  It remains to prove
  \[\Prob*{\Delta_1(N) > 0 \given \cA\cap E_\tau} = 1. \addtag \label{eq:second-cond}\]
Note that $\{\Delta_1(N) > 0\} = \{\swrN > \frac{N}{2}\}$ and let $\omega \in \cA \cap E_\tau$. By~\cref{def:additionalRVs} we have
  \[\frac{\swrN(\omega) - Nq - a \sqrt{N}}{1-2q} \leq \srwN(\omega) \leq \frac{\swrN(\omega) - Nq + a\sqrt{N}}{1-2q}
    \addtag \label{eq:interval}\]
  and
  \[\Delta_1(N)(\omega) + \alpha \Delta_2(N)(\omega) > B\ZaN\sqrt{N} \addtag \label{eq:totalsize}\]
  (we now omit $(\omega)$ for the sake of readability).
\cref{eq:delta1-rewrite,eq:totalsize} together with~\cref{eq:delta2-vs-rewrite} give for very simple increasing trees
  \begin{align*}
    0 <{}& \Delta_1(N) + \alpha \Delta_2(N) - B\ZaN\sqrt{N} \\
    ={}& 2(\swrN + \alpha \srwN) - (1+\alpha)N - B\ZaN\sqrt{N}. \addtag \label{eq:vinc-1}
    \intertext{Replace~\cref{eq:delta2-vs-rewrite} with~\cref{eq:delta2-se-rewrite} to get}
    0 <{}& \Delta_1(N) + \alpha \Delta_2(N) - B\ZaN\sqrt{N} \\
    ={}& (2+2\alpha)\swrN + 2\alpha \srwN - (1+2\alpha)N - B \ZaN \sqrt{N} \addtag \label{eq:shex-1}
  \end{align*}
  for shape exchangeable trees. If $\alpha > 0$, we apply the upper interval bound from~\cref{eq:interval},
  which together with~\cref{eq:vinc-1} gives for very simple increasing trees:
  \begin{align*}
    0 <{}& 2(\swrN + \alpha \srwN) - (1+\alpha)N - B\ZaN\sqrt{N} \\
    \leq{}& 2\left(\swrN + \alpha \frac{\swrN - Nq + a \sqrt{N}}{1-2q}\right) - (1+\alpha)N - B\ZaN\sqrt{N} \\
    ={}& 2\swrN\left( 1 + \alpha \frac{1}{1-2q} \right) - \left(1+\alpha+ \frac{2\alpha q}{1-2q}\right)N - \left( B\ZaN - \frac{2 \alpha a}{1-2q}\right)\sqrt{N}\\
    ={}& 2\swrN\left( 1 + \frac{\alpha}{1-2q} \right) - \left(1+\frac{\alpha}{1-2q}\right)N - \left( B\ZaN - \frac{2 \alpha a}{1-2q}\right)\sqrt{N}.
    \addtag \label{eq:vinc-2}
    \intertext{and for shape exchangeable trees, we continue from~\cref{eq:shex-1}:}
    0 <{}& (2+2\alpha)\swrN + 2\alpha \srwN - (1+2\alpha)N - B \ZaN \sqrt{N} \\
    \leq{}& (2+2\alpha)\swrN + 2\alpha\left( \frac{\swrN - Nq +
            a\sqrt{N}}{1-2q} \right)  - (1+2\alpha)N - B \ZaN \sqrt{N} \\
    ={}& 2\swrN\left(1+\alpha+\frac{\alpha}{1-2q}\right) - \left(1+2\alpha+\frac{2\alpha q}{1-2q}\right)N - \left(B \ZaN - \frac{2\alpha a}{1-2q}\right) \sqrt{N}\\
    ={}& 2\swrN\left(1+\frac{2\alpha-2\alpha q}{1-2q}\right) - \left(1+\frac{2\alpha-2\alpha q}{1-2q}\right)N - \left(B \ZaN - \frac{2\alpha a}{1-2q}\right) \sqrt{N}.
    \addtag \label{eq:shex-2}
  \end{align*}
It holds that
\begin{align*}
     a = \frac{B \ZaN}{4\alpha} \overset{q < \frac12}{\implies{}}& a < (1-2q)\frac{B\ZaN}{2\alpha} \\
    \overset{\alpha > 0}{\iff{}}& B\ZaN >2\alpha \frac{a}{1-2q}. \addtag
\end{align*}
  In the very simple increasing tree case we continue from~\cref{eq:vinc-2}
  \begin{align*}
    0 <{}& 2\swrN\left( 1 + \frac{\alpha}{1-2q} \right) - \left(1+\frac{\alpha}{1-2q}\right)N - \left( B\ZaN - \frac{2 \alpha a}{1-2q}\right)\sqrt{N} \\
    <{}& \left( 1 + \frac{\alpha}{1-2q} \right)(2\swrN - N)\addtag \label{eq:rebecca1}
    \intertext{and in shape exchangeable trees from~\cref{eq:shex-2}}
    0 <{}& 2\swrN\left(1+\frac{2\alpha-2\alpha q}{1-2q}\right) - \left(1+\frac{2\alpha-2\alpha q}{1-2q}\right)N - \left(B \ZaN - \frac{2\alpha a}{1-2q}\right) \sqrt{N}\\
    <{}& \left(1 + \frac{2 \alpha - 2\alpha q}{1-2q}\right)\left( 2\swrN - N\right).\addtag \label{eq:rebecca2}
  \end{align*}
  Both~\cref{eq:rebecca1,eq:rebecca2} imply $\swrN > \frac{N}{2}$ for $q < \frac12$, proving~\cref{lem:1-bound} for $\alpha > 0$.

  For negative values of $\alpha$, we use the lower interval bound from~\cref{eq:interval}. This only changes the $\sqrt{N}$-term, giving us for very simple increasing trees
  \begin{align*}
    0 <{}& 2(\swrN + \alpha \srwN) - (1+\alpha)N - B\ZaN\sqrt{N} \\
    \leq{}& 2\swrN\left( 1 + \frac{\alpha}{1-2q} \right) - \left(1+\frac{\alpha}{1-2q}\right)N - \left( B \ZaN + \frac{2\alpha a}{1-2q}\right)\sqrt{N} \addtag \label{eq:vinc-3}
    \intertext{and for shape exchangeable trees}
    0 <{}& (2+2\alpha)\swrN + 2\alpha \srwN - (1+2\alpha)N - B \ZaN \sqrt{N} \\
    \leq{}& 2\swrN\left(1+\frac{2\alpha-2\alpha q}{1-2q}\right) - \left(1+\frac{2\alpha-2\alpha q}{1-2q}\right)N - \left(B \ZaN + \frac{2\alpha a}{1-2q}\right) \sqrt{N}. \addtag \label{eq:shex-3}
  \end{align*}
We see that the first two terms correspond to~\cref{eq:vinc-2,eq:shex-2}. Again, the $\sqrt{N}$-term is positive by our choice of $a$, while the first two terms remain positive for $q < \frac{\alpha + 1}{2}$ for very simple increasing trees and $q < \frac{1 + 2\alpha}{2(1+\alpha)}$ for shape exchangeable trees and $\alpha$ in the respective allowed ranges. This finishes the proof of~\cref{lem:1-bound}.

\end{proof}


\section*{Acknowledgements}

We thank Stephan Wagner for useful discussions on the broadcasting problem and on \cite{DW19}. RS thanks C\'ecile Mailler for useful discussions on Elephant Random Walks. We also thank the anonymous reviewers for their helpful comments and for bringing the possible extension to $\alpha = -\frac12$ for very simple increasing trees to our attention.

This research is supported by the internal research funding (Stufe I) at Johannes Gutenberg-University, the TOP-ML project and the Deutsche Forschungsgemeinschaft (DFG, German Research Foundation) through Project-ID 233630050 - TRR 146.


\printbibliography

\end{document}